\def\rn{\mathbb{R}^n}
\def\rd{\mathbb{R}^d}
\renewcommand{\epsilon}{\varepsilon}
\newcommand{\uqh}{\overline{\dim}_{qH}} 
\newcommand{\lqh}{\underline{\dim}_{qH}} 
\newcommand\pot{^}
\newcommand\ubd{\overline{\mbox{\rm dim}}_{\rm B}\,}  
\newcommand\lbd{\underline{\mbox{\rm dim}}_{\rm B}\,}  
\newcommand\da{\underline{\mbox{\rm dim}}_{\,\theta}}
\newcommand\uda{\overline{\mbox{\rm dim}}_{\,\theta}}  
\newcommand\dap{\underline{\dim}_{\theta}^{m}}
\newcommand\udap{\overline{\dim}_{\theta}^{m}}
\newcommand{\dimh}{\dim_{\rm H}}
\newcommand{\be}{\begin{equation}}  
	\newcommand{\ee}{\end{equation}}
\newcommand{\indicator}{\mathbb{I}}
\DeclareFontFamily{U}{stix2bb}{}
\DeclareFontShape{U}{stix2bb}{m}{n} {<-> stix2-mathbb}{}
\newtheorem{theo}{Theorem}[section]
\newtheorem{cor}[theo]{Corollary}
\newtheorem{lem}[theo]{Lemma}
\newtheorem{defi}[theo]{Definition}
\newtheorem{exa}[theo]{Example}
\newtheorem{rmk}[theo]{Remark}
\DeclareFontFamily{U}{stix2bb}{}
\DeclareFontShape{U}{stix2bb}{m}{n} {<-> stix2-mathbb}{}
\begin{document}

\title [Critical values for intermediate dimensions of projections]{ Critical values for intermediate and box dimensions of projections and other images of compact sets}
\date{\today}
\author{Nicolas E. Angelini} \address{Universidad Nacional de San Luis \\Facultad de Ciencias F\'\i sico Matem\'aticas y Naturales\\Departamento de Matem\'atica\\ and \\CONICET \\Instituto de Matem\'atica Aplicada,\\ San Luis (IMASL)}
\email{nicolas.angelini.2015@gmail.com}
\author{Ursula M. Molter}
\address{Universidad de Buenos Aires,\\ Facultad de Ciencias Exactas y Naturales,\\ Departamento de Matem\'atica, \\and\\ UBA-CONICET,\\ Instituto de Investigaciones Matem\'aticas,\\ Luis A. Santal\'o (IMAS)}
\email{umolter@conicet.gov.ar}

\begin{abstract}
    Given a compact set $E \subset \rd$, we investigate for which values of $m$ the equality $\dim_\theta P_V(E) = m$ or $\dim_\theta P_V(E) = \dim_\theta E$ holds for $\gamma_{d,m}$-almost all $V \in G(d,m)$. Our result extends to more general functions, including orthogonal projections and fractional Brownian motion. As a particular case, when $\theta = 1$, the results apply to the box dimension.
\end{abstract}
\subjclass[2020]{28A80,28A75,60G22}
\keywords{Intermediate dimensions, Marstrand's Theorem, Projections, Fractional Brownian Motions, Capacity}
\maketitle

\section{Introduction}

The variation of the dimension of a set with respect to its orthogonal projection onto $m$-dimensional linear subspaces of $\rd$ is a classical problem in geometric measure theory, and there is a substantial body of results on this topic. In short, if $G(d,m)$ denotes the set of all $m$-dimensional subspaces of $\rd$, the question is: How is $\dim P_V(E)$ related to $\dim E$, where $E \subset \rd$, $V \in G(d,m)$, and $P_V$ denotes the orthogonal projection onto $V$.

When $\dim$ represents the Hausdorff dimension, the problem has already been studied, yielding the result that if $E \subset \rd$ is a Borel set and $P_V$ denotes the orthogonal projection onto an $m$-dimensional linear subspace $V$ with $m \leq d$, then
\be\label{Critical value for Hausdorff dimension}
    \dimh P_V E = \min \{ \dimh E, m \}
\ee
for $\gamma_{d,m}$-almost all $V \in G(d,m)$. This result was first obtained for the case $d = 2$, $m = 1$ by Marstrand \cite{Marstrand1954SomeFG} and later extended to Borel sets $E \subset \rd$, $d \geq 2$ by Mattila \cite{Mattila1975HausdorffDO}.

The lower and upper $\theta$-intermediate dimensions, $\da$ and $\uda$ respectively, form a continuous family of dimensions that interpolate between the Hausdorff and the box dimensions of bounded sets in $\rd$. They satisfy, for all $\theta \in (0,1]$, the inequalities $\dim_H E \leq \da E \leq \uda E \leq \ubd E$ and $\da E \leq \lbd E$, with $\overline{\dim}_1 E = \ubd E$. For $\theta = 1$, we have $\underline{\dim}_1 E = \lbd E$. These dimensions are continuous functions of $\theta$ for all $\theta \in (0,1]$; however, in general, the $\theta$-intermediate dimension does not converge to the Hausdorff dimension as $\theta \to 0$.

For box and $\theta$-intermediate dimensions, general results such as \eqref{Critical value for Hausdorff dimension} do not hold. Nevertheless, as shown in \cite{FalconerProj} for the box dimension and in \cite{Burrell_2021} for $\theta$-intermediate dimensions, for any fixed $\theta \in (0,1]$, the value of $\dim_\theta P_V(E)$ remains invariant for $\gamma_{d,m}$-almost every \( V \in G(d,m) \). Recall that $\dim_1 P_V(E) = \dim_B P_V(E)$.

These values of the intermediate dimension are referred to by the authors as the ``\textit{(Upper or Lower) $\theta$-intermediate dimension profiles},'' and are denoted by $\overline{\dim}^{m}_\theta$ and $\underline{\dim}^{m}_\theta$, respectively. In general, these profiles are difficult to work with.

When dealing with projections, the superscript $m$ in the dimension profiles typically denotes an integer. However, in other contexts -- as we will see -- the superscript may represent a real number. To distinguish between these cases, from now on we will use $t$ for real numbers and $m$ for integers when referring to dimension profiles. 

The precise result obtained in \cite{FalconerProj} and \cite{Burrell_2021} is:

\begin{theo}
    Let $E \subset \rd$ be bounded. Then for all $V \in G(d,m)$,
    \begin{center}
        $\da P_V(E) \leq \underline{\dim}^{m}_\theta E$ and $\uda P_V(E) \leq \overline{\dim}^{m}_\theta E$
    \end{center}
    for all $\theta \in (0,1]$, and for $\gamma_{d,m}$-almost all $V \in G(d,m)$,
    \begin{center}
        $\da P_V(E) = \underline{\dim}^{m}_\theta E$ and $\uda P_V(E) = \overline{\dim}^{m}_\theta E$.
    \end{center}
\end{theo}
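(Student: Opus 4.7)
The plan is to adapt the capacity-based framework developed by Falconer \cite{FalconerProj} for box dimension profiles and extended by Burrell--Falconer--Fraser \cite{Burrell_2021} to intermediate dimensions, reducing both inequalities to a Grassmannian integration estimate for a carefully chosen family of kernels.

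First, I would introduce, for each $r \in (0,1]$, $s \in [0,d]$, $\theta \in (0,1]$, and integer $0 \le m \le d$, a kernel $\phi^{s,m}_{r,\theta}\colon \rd \to [0,\infty)$ together with the associated capacity
\[
C^{s,m}_{r,\theta}(E) = \Bigl(\inf_{\mu} \iint \phi^{s,m}_{r,\theta}(x-y)\,d\mu(x)\,d\mu(y)\Bigr)^{-1},
\]
with $\mu$ ranging over Borel probability measures supported in $E$. The kernel is tailored so that $\underline{\dim}^m_\theta E$ (resp.\ $\overline{\dim}^m_\theta E$) is the unique critical $s$ at which $\log C^{s,m}_{r,\theta}(E)/(-\log r)$ has $\liminf$ (resp.\ $\limsup$) equal to $s$ as $r \to 0$; it degenerates to the standard intermediate-dimension kernel when $m=d$ and to Falconer's box dimension profile kernel when $\theta=1$. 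The upper bounds valid for every $V$ then follow directly: a pointwise comparison between $\phi^s_{r,\theta}(P_V z)$ and $\phi^{s,m}_{r,\theta}(z)$, using $|P_V z| \le |z|$ and the explicit shape of the profile kernel, yields $C^{s,\theta}_r(P_V E) \le K\, C^{s,m}_{r,\theta}(E)$ with $K$ independent of $V$ and $r$. Taking $\log$ and passing to $\liminf/\limsup$ in $r$ gives $\da P_V E \le \underline{\dim}^m_\theta E$ and $\uda P_V E \le \overline{\dim}^m_\theta E$.

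For the a.e.\ matching lower bounds I would follow the potential-theoretic Marstrand strategy. For $s$ slightly below the relevant profile, let $\mu_r$ be an $\epsilon$-near-optimal measure realizing $C^{s,m}_{r,\theta}(E)$. Consider the pushforward $(P_V)_*\mu_r$ on $P_V E$ and average its energy over $V$:
\[
\int_{G(d,m)} \iint \phi^{s}_{r,\theta}(P_V(x-y))\,d\mu_r(x)\,d\mu_r(y)\,d\gamma_{d,m}(V) = \iint \Bigl(\int_{G(d,m)} \phi^s_{r,\theta}(P_V(x-y))\,d\gamma_{d,m}(V)\Bigr)\,d\mu_r(x)\,d\mu_r(y).
\]
The central inequality is the Grassmannian bound $\int_{G(d,m)} \phi^s_{r,\theta}(P_V z)\,d\gamma_{d,m}(V) \le c\, \phi^{s,m}_{r,\theta}(z)$, which reflects the classical fact that $\int_{G(d,m)} |P_V e|^{-s}\,d\gamma_{d,m}(V) < \infty$ precisely for $s<m$, suitably truncated at the two characteristic scales $r$ and $r^{1/\theta}$ of the intermediate regime. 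Combined with Markov's inequality this yields $C^{s,\theta}_r(P_V E) \ge c'\, C^{s,m}_{r,\theta}(E)$ along a subsequence $r_n \to 0$ for $\gamma_{d,m}$-a.e.\ $V$; a countable intersection over a dense set of values of $s$ then completes the argument.

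The main obstacle is precisely this Grassmannian integration inequality. Unlike the Hausdorff/box case, the kernel $\phi^{s,m}_{r,\theta}$ must simultaneously encode (i) the two-sided covering constraint $r^{1/\theta} \le |U_i| \le r$ specific to $\theta$-intermediate dimensions and (ii) the profile truncation at level $m$ arising from projections, so the integration over $G(d,m)$ must be analyzed in regimes according to whether $|x-y|$ lies below $r$, between $r$ and $r^{1/\theta}$, or above $r^{1/\theta}$. Designing the kernel so that the three regimes match up to multiplicative constants, and verifying that the resulting critical value of $s$ indeed coincides with the profile $\underline{\dim}^m_\theta E$ or $\overline{\dim}^m_\theta E$, is the technical heart of \cite{FalconerProj} and \cite{Burrell_2021}.
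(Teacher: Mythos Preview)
The paper does not supply its own proof of this statement: it is quoted in the Introduction as a known result from \cite{FalconerProj} and \cite{Burrell_2021}, and restated in the Preliminaries as Theorem~\ref{Projection theorem for intermediate dimension} with the citation \cite[Theorem 5.1]{Burrell_2021}. So there is nothing in the present paper to compare against; the relevant benchmark is the proof in \cite{Burrell_2021}.

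Against that benchmark your outline is accurate. The capacity/kernel framework you describe is exactly Definition~\ref{intermediate idmension profile} here (and the definition in \cite{Burrell_2021}), and your lower-bound strategy---push forward a near-equilibrium measure, average the projected energy over $G(d,m)$, invoke the Grassmannian kernel inequality $\int_{G(d,m)}\phi^{s}_{r,\theta}(P_V z)\,d\gamma_{d,m}(V)\le c\,\phi^{s,m}_{r,\theta}(z)$, then use Markov plus a countable intersection over $s$---is precisely the argument of \cite[Section~5]{Burrell_2021}. One small discrepancy worth flagging: for the upper bound valid for every $V$, \cite{Burrell_2021} does not argue via the capacity comparison you sketch but rather through a covering lemma (their Section~4), showing that large $C^{s,m}_{r,\theta}(E)$ forces efficient $(r,r^{1/\theta})$-covers of $E$, which then project to covers of $P_V E$ since $P_V$ is $1$-Lipschitz. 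Your pushforward/monotone-kernel route (using $|P_V z|\le |z|$ and that $\phi^{s,m}_{r,\theta}$ is non-increasing) also works and is arguably cleaner, but you should be explicit that the capacity of $P_V E$ you are bounding is $C^{s,m}_{r,\theta}(P_V E)$ computed in $\mathbb{R}^m$, and that this equals the quantity governing $\da P_V E$ because for a set in $\mathbb{R}^m$ the $m$-profile coincides with the intermediate dimension.
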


The reader can also refer to \cite{feng2024intermediate} for additional results on $\theta$-intermediate dimensions of projections.

In the present paper, 
we study for which values of $m$, or under which conditions on the set $E$, results analogous to Marstrand’s theorem can be obtained for intermediate and box dimensions.

We introduce the notions of the \emph{upper} and \emph{lower quasi-Hausdorff dimensions} of a set $E$, denoted by $\uqh E$ and $\lqh E$, respectively, as 
$$\uqh E := \lim_{\theta \to 0} \overline{\dim}_\theta E \quad \text{and}\quad \lqh E := \lim_{\theta \to 0} \underline{\dim}_\theta E.$$ 
This helps us make the statements and proofs of our results clearer and more straightforward. 

We show that, given a compact set \( E \subset \mathbb{R}^d \) and a positive integer \( m \leq d \), the almost sure value of the upper and lower intermediate dimensions of the orthogonal projection onto an \( m \)-dimensional subspace satisfies
\[
\uqh P_V E = \min\left\{m, \uqh E \right\}
\quad \text{for } \gamma_{d,m}\text{-almost every } V \in G(d,m),
\]
and similarly,
\[
\lqh P_V E = \min\left\{m, \lqh E \right\}
\quad \text{for } \gamma_{d,m}\text{-almost every } V \in G(d,m),
\]
see Theorem \ref{main theorem}. From this, we deduce:
\[
\ubd P_V(E) = m \text{ for } \gamma_{d,m}\text{-almost all } V \in G(d,m) \iff m \leq \uqh E,
\]
and
\[
\lbd P_V(E) = m \text{ for } \gamma_{d,m}\text{-almost all } V \in G(d,m) \iff m \leq \lqh E.
\]

A direct consequence of Theorem \ref{main theorem} is that if \( E \) satisfies \( \lbd E = \lqh E \), then
\[
\lbd(P_V E) = \min\{m, \lbd E\}
\]
for almost every \( V \in G(d,m) \).

Note that in \cite[Corollary 1.3]{falconer2021assouad} the authors show that if \( \underline{\dim}_\mathrm{B} E = \dim_{qA} E \), then
\[
\underline{\dim}_\mathrm{B}(P_V E) = \min\{\underline{\dim}_\mathrm{B} E, m\}
\]
for almost every \( V \in G(d,m) \).

Since by Theorem \ref{Angelini ineq} we have that, $\underline{\dim}_\mathrm{B} E = \dim_{qA} E \quad \text{ implies } \quad \underline{\dim}_\mathrm{B} E = \lqh E,$
and the converse is not necessarily true, the result proven in this paper is strictly more general than the one in \cite{falconer2021assouad}.
All the above statements remain valid if one replaces the lower box dimension and the lower quasi-Hausdorff dimension with their upper counterparts.

We also investigate for which values of $m$ the intermediate dimension and the intermediate dimension profiles coincide. We obtain a general lower bound for the profiles in terms of the Assouad and Assouad dimension spectrum $\dim^{\alpha}_A$:
\[
\overline{\dim}_{\theta}^{t} E \geq \uda E - \max\{0, \dim_A^{\alpha} E - t, (\dim_A E - t)(1 - \alpha)\},
\]
for all $\alpha \in (0,1)$, which yields that if $m \geq \dim_{qA} E$, then for all $\theta \in (0,1]$ we have $\dim_\theta E = \dim_\theta P_V E$. This result was previously known only in the case of the box dimension.

Another known bound that we adapt to intermediate dimensions is the inequality:
\[
\overline{\dim}_{\theta}^{s} E \geq \frac{\overline{\dim}_{\theta}^{t} E}{1 + \left(\frac{1}{s} - \frac{1}{t}\right)\overline{\dim}_{\theta}^{t} E},
\]
and we demonstrate with a simple example that this inequality is sharp, with equality holding for certain sets.

Since these profiles are also related to the dimensions of more general families of functions -- such as fractional Brownian motion, as shown in \cite{Brownian1} -- we can extend all our results to such functions.

Theorem \ref{main theorem} and its corollaries provide genuinely new insights only in cases where the intermediate dimensions of the set are not continuous at $\theta = 0$. To illustrate this point, we now present an example of a set for which our projection theorems yield information that is not captured by existing results.

\begin{exa}
Let \( n \geq 2 \). For each \( 1 \leq j < n \), it is not difficult to construct a set \( E \subset \mathbb{R}^n \) such that
\[
\dim_H E < j \leq \lqh E.
\]
Indeed, let \( F_1 \subset \mathbb{R}^j \) be a set with \( j - 1 < \dim_H F_1 = \dim_B F_1 \leq j \); for instance, \( F_1 \) could be a self-similar set. Define
\[
F_2 = \left(\{0\} \cup \left\{ \frac{1}{\log(n)} \right\}_{n \in \mathbb{N}} \right)^{n-j} \subset \mathbb{R}^{n-j}.
\]
Then \( F_2 \) is a countable set such that \( \da F_2 = \overline{\dim}_\theta F_2 = n - j \) for all \( \theta \in (0,1] \). Now define \( E = F_1 \times F_2 \). Then
\[
\dim_H E = \dim_H F_1 \leq j < n - j + \dim_H F_1 = \lqh E,
\]
where the last equality follows from the product rule for intermediate dimensions (see \cite[Theorem 5.4]{Banaji}).

Therefore, by Theorem~\ref{main theorem}, we conclude that
\[
\lqh P_V E = \uqh P_V E= j \quad \text{for } \gamma_{n,j}\text{-almost every } V \in G(n,j).
\]
In contrast, Marstrand's projection theorem gives 
\[
\dim_H P_V E = \dim_H E < j \quad \text{for } \gamma_{n,j}\text{-almost every } V \in G(n,j).
\]
\end{exa}

\section{Preliminaries}\label{preliminares}

Throughout the whole document $B(x,r)$, $r>0$ will denote the open ball in $\rd$ with center $x$ and radius $r$. Given a non-empty set $E\subset\rd$, $\indicator_E (x)$ will represent the indicator function of $E$, i.e. $\indicator_E (x)=1$ if $x\in E$ and $\indicator_E(x)=0$ if $x\not\in E$.

We will write $|E|$ for the diameter of the set, $\dimh E$ will always represent the Hausdorff dimension while $\ubd E$ and $\lbd E$ will represent the upper and lower box dimension respectively, see \cite{FBook} for more information on these dimension.

The support of a measure $\mu$ on $\rd$ is defined as as usual,
$$spt\,\mu :=\rd\setminus\{x:\exists\, r>0\mbox{ such that } \mu(B(x,r))=0\}.$$

$G(d,m)$ will denote the manifold of all $m$ dimensional linear sub-spaces of $\rd$ and $\gamma_{d,m}$ the Haar measure on $G(d,m)$, see section 3 on \cite{M95} for more information.
Given a compact set $E\subset \rd$ and $V\in G(d,m)$, $P_V(E)$ will be the orthogonal projection of $E$ onto $V$.

The $\theta$ intermediate dimensions were introduced in \cite{FFK} and are defined as follows.

\begin{defi}\label{adef}
Let $F\subseteq \mathbb{R}\pot{d}$ be bounded. For $0\leq \theta \leq 1$ we define the {\em  lower $\theta$-intermediate dimension} of $F$ by
\begin{align*}
\da F =  \inf \big\{& s\geq 0  : \ \forall\  \epsilon >0, \ {\rm and\ all }\  \delta_0>0, {\rm there}\ \exists\ 0<\delta\leq \delta_0, \ { \rm and }\  \\
 & \{U_i\}_{i\in I}\  : F \subseteq \cup_{i\in I}U_i \ : \delta^{1/\theta} \leq  |U_i| \leq \delta \ {\rm and }\ \sum_{i\in I} |U_i|^s \leq \epsilon  \big\}.
\end{align*}

Similarly, we define the {\em  upper $\theta$-intermediate dimension} of $F$ by
\begin{align*}
\uda F =  \inf \big\{& s\geq 0  : \ \forall\  \epsilon >0, \ {\rm there }\ \exists\ \delta_0>0, \ : \ \forall\ 0<\delta\leq \delta_0, \mbox{ \rm there }\  \exists \\
 & \{U_i\}_{i\in I}\  : F \subseteq \cup_{i\in I}U_i \ : \delta^{1/\theta} \leq  |U_i| \leq \delta \ {\rm and }\ \sum_{i\in I} |U_i|^s \leq \epsilon  \big\}.
\end{align*}
\end{defi}

For all $\theta\in (0,1]$ and a compact set $A\subset\rd$ we have $\dimh A\leq \uda A\leq \ubd A$ and similarly with the Lower case replacing $\ubd$ with $\lbd$. This spectrum of dimensions has the property of being continuous for $\theta\in(0,1]$ with $\underline{\dim}_1 E = \lbd E$ and $\overline{\dim}_1 E =\ubd E$, leaving the natural question of the continuity in $\theta=0$, i.e. $\displaystyle\lim_{\theta\to 0}\da E=\dimh E$ or $\displaystyle\lim_{\theta\to 0}\uda E=\dimh E$, as a problem to study. In general these equalities are not true. The reader can find more information in \cite{FFK}.

As mentioned in the introduction, we will adopt a specific notation for the limiting case of the intermediate dimension as the parameter $\theta$ tends to $0$. We define the following quantities, which we refer to as the \textit{quasi-Hausdorff dimensions}:

\begin{defi}
Let \( E \subset \mathbb{R}^d \) be a non-empty bounded set. We define the \emph{lower quasi-Hausdorff dimension} and the \emph{upper quasi-Hausdorff dimension} of \( E \) as
\[
\lqh E := \lim_{\theta \to 0} \da E
\quad \text{and} \quad
\uqh E := \lim_{\theta \to 0} \uda E,
\]

If both limits coincide, we say that the \emph{quasi-Hausdorff dimension} of \( E \) exists, and we denote it by
\[
\dim_{qH} E := \lqh E = \uqh E.
\]
\end{defi}

There is also an equivalent definition of intermediate dimension. Working with it will be more beneficial to our objectives. For $E\subset \rd$ bounded and non-empty, $\theta\in (0,1]$, $r>0$ and $s\in[0,d]$, define

$$ S^{s}_{r,\theta}(E)=\inf\left\{\displaystyle\sum_i |U_i|^s : \{U_i\}_i \text{ is a cover of }E\text{ such that } r\leq |U_i|\leq r^\theta \text{ for all } i\right\}.$$

Then we have 

$$\da E=\left( \text{ the unique } s\in[0,d] \text{ such that } \displaystyle\liminf_{r\to 0} \frac{\log S^{s}_{r,\theta}(E)}{-\log(r)}=0\right)$$
and 
$$\uda E=\left( \text{ the unique } s\in[0,d] \text{ such that } \displaystyle\limsup_{r\to 0} \frac{\log S^{s}_{r,\theta}(E)}{-\log(r)}=0\right).$$

Furthermore, the next result holds

\begin{lem}\cite[Lemma 2.1]{Burrell_2021} Let $\theta\in (0,1]$ and $E\subset\rd$. For each $0<r<1$,
$$-(s-t)\leq \frac{\log S^{s}_{r,\theta}(E)}{-\log(r)} - \frac{\log S^{t}_{r,\theta}(E)}{-\log(r)}\leq -\theta (s-t) \quad (0\leq t\leq s\leq d).$$

\begin{rmk}\label{s leq uda}
    Note that this implies that if $\displaystyle\liminf_{r\to 0} \frac{\log S^{s}_{r,\theta}(E)}{-\log(r)}\geq 0$ then  $s\leq \da E$, and analogously, if $\displaystyle\limsup_{r\to 0} \frac{\log S^{s}_{r,\theta}(E)}{-\log(r)}\geq 0$ then  $s\leq \uda E$.
\end{rmk}

\end{lem}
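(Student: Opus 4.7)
The plan is to exploit the fact that the admissibility of a cover $\{U_i\}$ in the definition of $S^{s}_{r,\theta}(E)$ depends only on the diameter constraint $r\leq |U_i|\leq r^{\theta}$, not on the exponent. Hence any cover competing for $S^{s}_{r,\theta}(E)$ is automatically competing for $S^{t}_{r,\theta}(E)$, and vice versa. This lets us compare the two quantities by a pointwise bound on $|U_i|^s$ versus $|U_i|^t$.

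The key pointwise observation is that for any set $U$ with $r\leq |U|\leq r^{\theta}$ and $0\leq t\leq s\leq d$, writing $|U|^s=|U|^t\,|U|^{s-t}$ and using the two-sided bound on $|U|$ yields
\[
r^{s-t}\,|U|^{t}\;\leq\;|U|^{s}\;\leq\;r^{\theta(s-t)}\,|U|^{t}.
\]
Summing over an admissible cover $\{U_i\}$ of $E$ and then taking infima, one direction at a time, gives
\[
r^{s-t}\,S^{t}_{r,\theta}(E)\;\leq\;S^{s}_{r,\theta}(E)\;\leq\;r^{\theta(s-t)}\,S^{t}_{r,\theta}(E).
\]
For the right-hand inequality I would pick a near-optimal cover for $S^{t}_{r,\theta}(E)$ and apply the upper pointwise bound termwise; for the left-hand inequality I would pick a near-optimal cover for $S^{s}_{r,\theta}(E)$, apply the lower pointwise bound termwise, and use that this same cover is admissible for $S^{t}_{r,\theta}(E)$.

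To finish, take logarithms and divide by $-\log r$, which is strictly positive since $0<r<1$. Doing so carefully (noting that $\log r<0$, so e.g.\ $\theta(s-t)\log r$ is non-positive) transforms the two-sided multiplicative inequality into
\[
-(s-t)\;\leq\;\frac{\log S^{s}_{r,\theta}(E)}{-\log r}\;-\;\frac{\log S^{t}_{r,\theta}(E)}{-\log r}\;\leq\;-\theta(s-t),
\]
which is the claim. There is no real obstacle here; the only point requiring care is the sign bookkeeping when dividing by $-\log r$ and making sure the correct near-optimal cover is used in each of the two directions. The result of the remark then follows immediately: if either limit of $\log S^{s}_{r,\theta}(E)/(-\log r)$ is nonnegative, then the same limit of $\log S^{t}_{r,\theta}(E)/(-\log r)$ for $t<s$ is at least $\theta(s-t)>0$, forcing $t$ to be below the relevant intermediate dimension, and hence $s\leq\da E$ (resp.\ $s\leq \uda E$).
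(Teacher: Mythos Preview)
Your argument is correct and is exactly the standard proof: the admissible covers are the same for every exponent, the pointwise bound $r^{s-t}|U|^t\le |U|^s\le r^{\theta(s-t)}|U|^t$ for $r\le |U|\le r^\theta$ passes to the infimum, and taking $\log$ and dividing by $-\log r>0$ gives the stated two-sided inequality. The paper does not include its own proof of this lemma---it is simply quoted from \cite[Lemma~2.1]{Burrell_2021}---and your derivation is the same elementary computation carried out there, including the deduction of the remark from the strict monotonicity (by at least $\theta(s-t)$) of the map $s\mapsto \log S^{s}_{r,\theta}(E)/(-\log r)$.
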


Let $E\subset\rd$ be a non-empty set and let $N_r(E)$ be the minimum number of sets of diameter $r$ that can cover
$E$. The \textit{Assouad dimension} of $E$ is defined by  \begin{align*}
    \dim_A E =\inf \big\{ s : \exists \ C>0\ \text{such that } \ &\forall \ 
     0<r<R\mbox{ and }\\
     & \hspace{-1cm} x\in E, N_r(B(x,R)\cap E)\leq C\left({R}/{r}\right)\pot{s} \big\} 
\end{align*}
and the \textit{upper Assouad spectrum} of $E$, for $\alpha \in (0,1)$ is defined by  
\begin{align*}
    \overline\dim_A\pot{\alpha} E =\inf \big\{ t :\exists\ C>0 \ \text{such that } 
    & \forall\  0<r\leq R\pot{1/\alpha} <R<1\mbox{ and } \\
    & \hspace{-1cm} x\in E, N_r(B(x,R)\cap E)\leq C\left({R}/{r}\right)\pot{t} \big\}. 
\end{align*}
The upper Assouad spectrum is non decreasing in $\alpha$. Finally, the \textit{quasi-Assouad dimension}, introduced in \cite{lu2016quasi}, is defined by $$\dim_{qA}E=\displaystyle\lim_{\alpha\to 1} \overline{\dim}_A\pot{\alpha}E$$ and we have for $\alpha\in (0,1)$
$$\ubd E\leq \overline{\dim}_A\pot{\alpha}E  \leq \dim_{qA}E\leq  \dim_A E.$$
For background on Assouad-type dimensions, the reader can refer to \cite{fraser2020assouad}.

Given a compact set $E\subset\rd$ and $1\leq m\leq d$, in \cite{Burrell_2021} the lower and upper $\theta$- intermediate dimension profile, $\dap E$ and $\udap E$ respectively, were introduced in order to denote the value of the $\theta$ intermediate dimension (lower or upper respectively) of $P_V (E)$, for $\gamma_{d,m}$ a.e. $V\in G(d,m)$.

In fact, the next results holds.

\begin{theo}\cite[Theorem 5.1]{Burrell_2021} \label{Projection theorem for intermediate dimension}
    Let $E\subset\rd$ be bounded. Then, for all $V\in G(d,m)$
    $$\underline{\dim}_\theta P_V E\leq \underline{\dim}_{\theta}^{m} E \hspace{0.2 cm}\mbox{   and   } \hspace{0.2 cm}\overline{\dim}_\theta P_V E\leq \overline{\dim}_{\theta}^{m} E,$$
    for all $\theta\in (0,1].$ Moreover, for $\gamma_{d,m}-$ almost all $V\in G(d,m)$,
      $$\underline{\dim}_\theta P_V E= \underline{\dim}_{\theta}^{m} E\hspace{0.2 cm}\mbox{  and  } \hspace{0.2 cm}\overline{\dim}_\theta P_V E= \overline{\dim}_{\theta}^{m}E,$$
      for all $\theta\in(0,1].$
\end{theo}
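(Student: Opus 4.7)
The plan is to follow the classical capacity/potential strategy used for Marstrand-type projection theorems, suitably adapted to the intermediate regime. The first ingredient is an equivalent characterization of the profiles via a modified covering sum
$$S^{s,m}_{r,\theta}(E) \;=\; \inf \sum_i |U_i|^s\, \psi_{m,r}(|U_i|),$$
where the infimum runs over covers $\{U_i\}$ of $E$ with $r \leq |U_i| \leq r^\theta$, and $\psi_{m,r}(d)$ is a kernel reflecting the average number of scale-$r$ sets needed to cover $P_V(U)$ for a typical $V$ when $|U|=d$ (concretely, $\psi_{m,r}(d) \asymp \min\{1,(d/r)^m\}$). One first verifies that $\udap E$ (resp.\ $\dap E$) is the unique $s$ for which $\log S^{s,m}_{r,\theta}(E)/(-\log r) \to 0$ in $\limsup$ (resp.\ $\liminf$) as $r \to 0$.

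For the deterministic upper bound, valid for every $V \in G(d,m)$, take a near-optimal cover $\{U_i\}$ realizing $S^{s,m}_{r,\theta}(E)$. Since $V$ is $m$-dimensional and $|P_V(U_i)| \leq |U_i|$, each $P_V(U_i)$ decomposes into $O(\psi_{m,r}(|U_i|))$ subsets of diameter in $[r,r^\theta]$, producing a cover of $P_V(E)$ with $S^s_{r,\theta}(P_V E) \lesssim S^{s,m}_{r,\theta}(E)$. Taking $\limsup$ and $\liminf$ of $\log/(-\log r)$ yields $\uda P_V E \leq \udap E$ and $\da P_V E \leq \dap E$, respectively.

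For the almost-sure lower bound, fix $s < \udap E$, so that $S^{s,m}_{r,\theta}(E) \geq r^{-\eta}$ for some $\eta > 0$ and infinitely many small $r$. For each $V$, let $\{W_j^V\}$ be an optimal cover of $P_V(E)$ with $r \leq |W_j^V| \leq r^\theta$; the preimages $P_V^{-1}(W_j^V) \cap E$ cover $E$ by cylindrical slabs. Averaging the sum $\sum_j |W_j^V|^s$ over $V \in G(d,m)$ with respect to $\gamma_{d,m}$ and invoking the classical projection estimate
$$\int_{G(d,m)} \indicator_{\{|P_V(x)|\leq r\}}\, d\gamma_{d,m}(V) \;\leq\; C\min\bigl\{1,\,(r/|x|)^m\bigr\},$$
one obtains $\int_{G(d,m)} S^s_{r,\theta}(P_V E)\, d\gamma_{d,m}(V) \gtrsim S^{s,m}_{r,\theta}(E)$. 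This forces $\uda P_V E \geq s$ for $\gamma_{d,m}$-a.e.\ $V$, and letting $s \nearrow \udap E$ along a countable dense sequence yields the upper case; the lower case is identical with $\liminf$ replacing $\limsup$.

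The main obstacle will be two-fold. First, the uniformity over $\theta \in (0,1]$ requires the exceptional null set to be independent of $\theta$; this is achieved by running the capacity argument on a countable dense subset of $\theta$'s and then invoking the continuity of $\uda$ and $\da$ in $\theta$ on $(0,1]$. Second, the averaging step must preserve the diameter constraint $r \leq |W| \leq r^\theta$ through both the lift to cylindrical preimages and the subsequent discretization; matching the kernel $\psi_{m,r}$ exactly to the classical projection estimate is precisely what makes this profile the ``right'' definition for the projection theorem, and verifying this matching is the most delicate bookkeeping in the argument.
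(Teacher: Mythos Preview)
This theorem is not proved in the present paper at all: it appears in the preliminaries section as a cited result, attributed to \cite[Theorem 5.1]{Burrell_2021}, and the paper simply uses it as a black box. There is therefore no proof in the paper to compare your proposal against.

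That said, your sketch has a genuine gap in the almost-sure lower bound. You take, for each $V$, an \emph{optimal} cover $\{W_j^V\}$ of $P_V(E)$ and then ``average $\sum_j |W_j^V|^s$ over $V$'' to deduce
\[
\int_{G(d,m)} S^s_{r,\theta}(P_V E)\, d\gamma_{d,m}(V) \;\gtrsim\; S^{s,m}_{r,\theta}(E).
\]
This step does not work as written: the cover $\{W_j^V\}$ depends on $V$ in an uncontrolled (and generally non-measurable) way, so there is no single object to which the kernel estimate $\int \indicator_{\{|P_V x|\leq r\}}\,d\gamma_{d,m} \leq C\min\{1,(r/|x|)^m\}$ can be applied via Fubini. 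Averaging an \emph{infimum} over $V$ does not produce a lower bound by integrating a fixed test object; at best it would give an upper bound. The standard route (and the one actually taken in \cite{Burrell_2021}) goes through measures and capacities rather than covers: one takes an equilibrium measure $\mu$ on $E$ for $C^{s,m}_{r,\theta}$, pushes it forward to $(P_V)_*\mu$, and uses the projection kernel estimate together with Fubini to bound $\int_{G(d,m)} \iint \phi^{s,m}_{r,\theta}(P_V x - P_V y)\,d\mu\,d\mu\,d\gamma_{d,m}$ from above by a constant times the energy of $\mu$. This yields, for $\gamma_{d,m}$-a.e.\ $V$, a lower bound on $C^{s,m}_{r,\theta}(P_V E)$ and hence on $\da P_V E$ and $\uda P_V E$. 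Your covering formulation can be made to work, but only after dualizing to measures; as stated, the averaging step is the wrong way round.
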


\begin{theo}\cite[Theorem 3.9]{Brownian1} \label{exceptional directions}
Let $E\subset\rd$ be compact, $m\in\{1,...,d\}$ and $0\leq\lambda\leq m$, then
$$\dimh\{V\in G(d,m):\uda P_V E<\overline{\dim}_{\theta}^{\lambda}E\}\leq m(d-m)-(m-\lambda)$$
and
$$\dimh\{V\in G(d,m):\da P_V E<\underline{\dim}_{\theta}^{\lambda}E\}\leq m(d-m)-(m-\lambda).$$
    
\end{theo}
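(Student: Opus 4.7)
The plan is to follow the standard exceptional-set strategy used in Marstrand-type projection theorems, adapted to the intermediate-dimensional setting through the capacity characterization of the profiles $\overline{\dim}_\theta^\lambda E$ and $\underline{\dim}_\theta^\lambda E$ developed in \cite{Burrell_2021}. These profiles can be described as the infimum of those $s$ for which an appropriate $(s,\lambda,\theta,r)$-capacity $C_{r,\theta}^{s,\lambda}(E)$ of $E$ decays polynomially in $r$ along the relevant sequence of scales. The argument is by contradiction: a Frostman measure on the alleged large exceptional set of subspaces, when integrated against the standard Grassmannian kernel, produces a measure on $E$ whose profile capacity decays too fast, contradicting the definition of the profile.

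Concretely, fix $s<\overline{\dim}_\theta^\lambda E$ and set
\[
W_s = \{V\in G(d,m):\uda P_V E < s\}.
\]
Assume for contradiction that $\dimh W_s > m(d-m)-(m-\lambda)$, pick $u$ strictly between these two quantities and, by Frostman's lemma, choose a Borel probability measure $\nu$ on $W_s$ with $\nu(B(V,\rho))\lesssim \rho^u$. The key geometric input is the Mattila-type Grassmannian estimate
\[
\int_{G(d,m)} \mathbf{1}_{\{|P_V x - P_V y|\leq r\}}\,d\nu(V) \lesssim \min\bigl\{1,(r/|x-y|)^\lambda\bigr\},
\]
which follows from the fact that the codimension-$m$ subvariety $\{V:P_V x=P_V y\}$ of $G(d,m)$ has dimension $m(d-m)-m$, so a Frostman measure of exponent $u\geq m(d-m)-(m-\lambda)$ yields exactly the exponent $u-(m(d-m)-m)=\lambda$ on the right. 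For each $V\in W_s$ the capacity characterization provides a Borel probability measure $\mu_V$ on $E$ whose push-forward $P_V\mu_V$ has $s$-capacity decaying polynomially along some sub-sequence of scales. Using weak-$*$ compactness on $E$, one extracts a single $\mu$ witnessing this behaviour for $\nu$-a.e.\ $V$ along a common sub-sequence; integrating the projected capacity against $\nu$ and applying the Grassmannian estimate converts this into polynomial decay of the full profile capacity $C_{r,\theta}^{s,\lambda}(\mu)$, contradicting $s<\overline{\dim}_\theta^\lambda E$. Letting $s$ increase to $\overline{\dim}_\theta^\lambda E$ yields the stated upper bound on $\dimh W_s$, and the lower case is identical with $\liminf$ in place of $\limsup$.

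The main obstacle I foresee is the passage from the family $\{\mu_V\}_{V\in W_s}$ of individual witness measures to a single $\mu$ compatible with $\nu$-a.e.\ $V$ simultaneously: the two-scale constraint $r\leq |U_i|\leq r^\theta$ intrinsic to intermediate dimensions requires both the witness measure and the admissible scales to be chosen coherently across the Grassmannian. This is where weak-$*$ compactness, a Fatou/Fubini manipulation of the integrated capacity, and a diagonal extraction of scales would enter. A secondary issue is the upper versus lower distinction (with $\limsup$ versus $\liminf$ in the capacity behaviour): the skeleton of the argument is the same, but the order of quantifiers over scales differs, and one must verify in both cases that a common exceptional sequence of radii along which the profile capacity fails to decay can be produced uniformly across $\nu$-almost every $V$ in $W_s$.
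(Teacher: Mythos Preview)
This theorem is not proved in the present paper; it is quoted verbatim from \cite{Brownian1} (as Theorem~3.9 there) and appears in the preliminaries section purely as background. There is therefore no proof in this paper against which to compare your proposal.

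For what it is worth, your outline follows the right general strategy (Frostman measure on the putative exceptional set combined with a Mattila--type transversality estimate on the Grassmannian). The difficulty you flag---extracting a single witness measure $\mu$ from the family $\{\mu_V\}_{V\in W_s}$ via weak-$*$ compactness and diagonal extraction---is in fact usually avoided altogether in the capacity approach to these profiles. One instead takes, for each scale $r$, the equilibrium measure $\mu_r$ on $E$ realising $C_{r,\theta}^{s,\lambda}(E)$, and proves a pointwise kernel comparison of the form
\[
\int \phi_{r,\theta}^{s,m}(P_V x - P_V y)\,d\nu(V) \;\lesssim\; \phi_{r,\theta}^{s,\lambda}(x-y),
\]
valid because the Frostman exponent of $\nu$ exceeds $m(d-m)-(m-\lambda)$. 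Integrating against $\mu_r\times\mu_r$ and applying Fubini then gives, for $\nu$-a.e.\ $V$, a lower bound on $C_{r,\theta}^{s,m}(P_V E)$ in terms of $C_{r,\theta}^{s,\lambda}(E)$ along all scales simultaneously, which feeds directly into Remark~\ref{cota inferior}. No compactness or extraction of a common subsequence of scales is needed, and the upper/lower distinction is handled automatically by whether one takes $\limsup$ or $\liminf$ at the end.
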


Results on the exceptional directions for the box dimension of projections of sets were first obtained in \cite{falconer2021capacity}.
Later, in \cite{Brownian1}, the $\theta$ intermediate profile was generalized from considering integers $m$ to consider positive real numbers $t>0$.  These new possible values turn out to appear for the case of the almost sure $\theta$ intermediate dimension of the image of the set $E$ under the fractional Brownian motion.

We recall the definition of index$-\alpha$ fractional Brownian motion, which, following the notation of \cite{Brownian1}, we denote by $B_\alpha:\rd\to\mathbb{R}^m$ for $d,m\in\mathbb{N}$. $B_\alpha=(B_{\alpha,1},...,B_{\alpha,m})$, where $B_{\alpha,i}:\mathbb{R}^d\to \mathbb{R}$ for each $i$. They satisfy:
\begin{itemize}
    \item $B_{\alpha,i}(0)=0$;
    \item $B_{\alpha,i}$ is continuous with probability $1$;
    \item the increments $B_{\alpha,i}(x)-B_{\alpha,i}(y)$ are normally distributed with mean $0$ and variance $|x-y|^{2\alpha}$ for all $x,y \in\rd$.
\end{itemize}
Moreover, $B_{\alpha,i}$ and $B_{\alpha,j}$ are independent for all $i,j\in \{1,...,m\}$.\\
The next result holds.

\begin{theo}\cite[Theorem 3.4]{Brownian1} \label{dimension of Fractional Brownian motion}
    Let $\theta\in (0,1]$, $m,d\in\mathbb{N}$, $B_\alpha:\rd\to\mathbb{R}^{m}$ be index-$\alpha$ fractional Brownian motion $(0<\alpha<1)$ and $E\subset\rd$ be compact. Then almost surely
    $$\da B_{\alpha}(E)=\frac{1}{\alpha}\underline{\dim}_{\theta}^{m\alpha}E 
    \quad \text{ and }\quad 
     \uda B_{\alpha}(E)=\frac{1}{\alpha}\overline{\dim}_{\theta}^{m\alpha}E.$$
\end{theo}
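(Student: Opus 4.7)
My plan is to prove the two equalities by establishing matching upper and lower bounds on the intermediate dimensions of $B_\alpha(E)$; I describe the argument for the upper intermediate dimension, the lower being analogous after replacing $\limsup$ with $\liminf$. For the upper bound $\uda B_\alpha(E) \leq \frac{1}{\alpha}\overline{\dim}_\theta^{m\alpha} E$, I would fix $s > \overline{\dim}_\theta^{m\alpha} E$ and produce, for arbitrarily small $\delta > 0$, a cover $\{U_i\}$ of $E$ with $\delta^{1/\theta}\leq|U_i|\leq\delta$ realizing the profile to within $\epsilon$. Kolmogorov's continuity theorem and Fernique-type bounds give, almost surely and with a random constant $C$,
$$\mathrm{diam}\,B_\alpha(U_i) \leq C\, |U_i|^\alpha \bigl(\log(1/|U_i|)\bigr)^{1/2}$$
for all $i$. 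Since $B_\alpha(U_i)\subset\mathbb{R}^m$, it can be covered by at most $\max\{1,(|U_i|^\alpha/\rho)^m\}$ sets of diameter $\rho$ for any $\rho\leq|U_i|^\alpha$. Choosing $\rho$ in $[\delta^{\alpha/\theta},\delta^\alpha]$ individually for each $i$ produces a cover of $B_\alpha(E)$ whose sum at exponent $s/\alpha$ is controlled, up to a negligible $\delta^{-\epsilon}$ factor from the logarithmic modulus, by exactly the profile sum on $E$. The $\mathbb{R}^m$-dimensional restriction is precisely what converts the ``$m\alpha$''-level profile on the $E$-side into an unrestricted $s/\alpha$-exponent covering sum on the image side.

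For the lower bound $\uda B_\alpha(E) \geq \frac{1}{\alpha}\overline{\dim}_\theta^{m\alpha} E$, I would use potential theory. Given a Borel probability measure $\mu$ on $E$ with pushforward $\nu = (B_\alpha)_*\mu$, Fubini yields
$$\mathbb{E}\!\left[\int\!\!\int \psi(|u-v|)\,d\nu(u)\,d\nu(v)\right] = \int\!\!\int \mathbb{E}\bigl[\psi(|B_\alpha(x)-B_\alpha(y)|)\bigr]\,d\mu(x)\,d\mu(y)$$
for any nonnegative kernel $\psi$. Taking $\psi$ to be a truncated power kernel of order $s/\alpha$ and using that $B_\alpha(x)-B_\alpha(y)$ is centered Gaussian on $\mathbb{R}^m$ with covariance $|x-y|^{2\alpha} I_m$, the inner Gaussian expectation computes, up to constants, to the Falconer-Burrell profile kernel for $E$ at level $m\alpha$ and exponent $s$, by splitting into the regimes where $|x-y|^\alpha$ is below or above the truncation scale. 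Hence an optimal profile measure on $E$ gives, through $B_\alpha$, a measure on $B_\alpha(E)$ with controlled energy at exponent $s/\alpha$; a standard energy-to-covering conversion (as used in \cite{Burrell_2021}) then yields $\uda B_\alpha(E) \geq s/\alpha$ almost surely for every $s<\overline{\dim}_\theta^{m\alpha} E$.

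The main obstacle is the two-sided comparison between the Gaussian kernel expectation and the dimension-profile kernel at the \emph{real} parameter $m\alpha$; this correspondence is what makes the extension of the profile from integer to real index natural and is the heart of the theorem. Additional care is required to thread the two-scale covering structure of the intermediate dimension (scales $\delta^{1/\theta}$ and $\delta$ on $E$, and $\delta^{\alpha/\theta}$ and $\delta^\alpha$ on the image) simultaneously through both the covering and potential-theoretic sides, and to absorb the logarithmic FBM moduli into the arbitrarily small $\epsilon$ available in the profile definition.
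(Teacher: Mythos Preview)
The paper does not prove this theorem: it is quoted verbatim from \cite[Theorem~3.4]{Brownian1} as a preliminary tool, with no argument given. So there is no ``paper's own proof'' to compare your proposal against.

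That said, your outline follows the structure of the proof in \cite{Brownian1}, which is summarised at the very end of the present paper: one verifies that index-$\alpha$ fractional Brownian motion belongs to the family $\mathcal{F}_1$ by checking both the distributional tail condition~(1) (this is your Gaussian kernel computation for the lower bound) and the H\"older condition~(2) (this drives the upper bound), and then invokes the general image theorems \cite[Theorems~3.1 and~3.3]{Brownian1} with $\gamma=1/\alpha$.

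One point in your upper-bound sketch needs correction. You write of ``a cover $\{U_i\}$ of $E$ \dots\ realizing the profile to within $\epsilon$'', but the profile $\overline{\dim}_\theta^{m\alpha}E$ is defined via the capacities $C_{r,\theta}^{s,m\alpha}(E)$ and equilibrium measures, not through covers of $E$. The passage from a capacity bound on $E$ to an efficient cover of $f(E)$ is precisely the nontrivial content of \cite[Theorem~3.1]{Brownian1}: one starts from the equilibrium measure for $\phi_{r,\theta}^{s,m\alpha}$, uses the kernel inequality it satisfies to control counts of image sets at each dyadic scale in $[\delta^{1/\theta},\delta]$, and only then assembles a cover of $f(E)$. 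Your idea of exploiting the $m$-dimensionality of the target to refine each $B_\alpha(U_i)$ into cubes is morally right, but it must be fed by the capacity/equilibrium side, not by an unspecified ``profile cover''. A second technical point: $B_\alpha$ is almost surely only $(\alpha-\epsilon)$-H\"older, not $\alpha$-H\"older, so the upper bound is first obtained with exponent $m(\alpha-\epsilon)$ and one lets $\epsilon\to 0$ using the continuity of $t\mapsto\overline{\dim}_\theta^{t}E$ \cite[Corollary~3.8]{Brownian1}.
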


To define the intermediate dimension profiles we first need to define the necessary kernel functions.
\begin{defi}
    Let $\theta \in (0,1],t>0, 0\leq s\leq t$ and $0<r<1$, define $\phi_{r,\theta}\pot{s,t}(x)$ by 
    \begin{equation*}
\phi_{r,\theta}\pot{s,t}(x)=
    \begin{cases}
        1 & \text{if } 0\leq |x|<r\\
        \left(\frac{r}{|x|}\right)\pot{s} & \text{if } r\leq |x|\leq r\pot{\theta}\\
        \frac{r\pot{\theta(t-s)+s}}{|x|\pot{t}} & \text{if } r\pot{\theta}\leq |x|
    \end{cases}
\end{equation*}
and $$C_{r,\theta}\pot{s,t}(E)=\left( \displaystyle\inf_{\mu\in\mathcal{M}(E)}\iint \phi_{r,\theta}\pot{s,t}(x-y) \, d\mu (x) \, d\mu (y)\right)\pot{-1} $$
where $\mathcal{M}(E)$ denotes the set of probability measures supported on $E$.
\end{defi}

The lower and upper $\theta$ intermediate dimension profiles of a bounded set $E\subset\rd$ are defined as follows.
\begin{defi}\label{intermediate idmension profile}
    Let $t>0$ and $E\subset\rd$. The \textit{lower intermediate dimension profile} of $E$ is defined as
    $$\underline\dim_\theta\pot{t}E=\left( \mbox{ The unique } s\in[0,t]\mbox{ such that }  \liminf_{r\to 0}\frac{\log C_{r,\theta}\pot{s,t}(E)}{-\log r} = s\right) $$
    and the \textit{upper intermediate dimension profile} of $E$ is $$\overline\dim_\theta\pot{t}E=\left( \mbox{ The unique } s\in[0,t]\mbox{ such that }  \limsup_{r\to 0}\frac{\log C_{r,\theta}\pot{s,t}(E)}{-\log r} = s\right). $$
\end{defi}

\begin{rmk}\label{cota inferior}
    By \cite[Lemma 3.2]{Burrell_2021} and \cite[Lemma 2.2]{Brownian1} we have that 
    $$\text{if}\  0\leq \liminf_{r\to 0 } \frac{\log C_{r,\theta}\pot{s,t}(E)}{-\log r} -s, \quad \text{then} \quad s\leq \underline\dim_\theta\pot{t}E,$$ and  
    $$\text{if}\  0\leq \limsup_{r\to 0 } \frac{\log C_{r,\theta}\pot{s,t}(E)}{-\log r} -s, \quad \text{then} \quad s\leq \overline\dim_\theta\pot{t}E.$$ 
\end{rmk}
Finally we give a result that we will need later.

\begin{cor} \cite[Corollary 3.14]{Banaji}\label{banajis bound}
   Let $E\subset\rd$ bounded. Then $$\uda E\geq \frac{\theta\, d\,\ubd E}{d-(1-\theta)\ubd E}.$$ 
   The same holds replacing $\uda$ with $\da $ and $\ubd$ with $\lbd$ respectively.
\end{cor}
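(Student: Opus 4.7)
The plan is to bound the cover sum $S^{s}_{\delta,\theta}(E)$ from below by splitting an arbitrary admissible cover at a carefully chosen intermediate scale $r^{\ast}\in[\delta^{1/\theta},\delta]$ that balances the two extreme covering strategies of ``many tiny sets'' versus ``few large sets.'' Writing $b:=\ubd E$, I will fix $s$ with $0<s<\theta db/(d-(1-\theta)b)$ and show $s\leq\uda E$; letting $s$ approach the threshold will then yield the claim. The candidate scale is
$$
r^{\ast}:=\delta^{1+(s/d)(1/\theta-1)},
$$
whose exponent lies in $[1,1/\theta]$ for any $s\in[0,d]$ and $\theta\in(0,1]$, so $r^{\ast}\in[\delta^{1/\theta},\delta]$ as required.

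Given any cover $\{U_i\}$ of $E$ with $\delta^{1/\theta}\leq|U_i|\leq\delta$, I split the indices into $\mathcal{I}_{\mathrm{sm}}=\{i:|U_i|\leq r^{\ast}\}$ and $\mathcal{I}_{\mathrm{lg}}=\{i:|U_i|>r^{\ast}\}$. A standard volume argument (covering each large set by cubes of side comparable to $r^{\ast}$) gives
$$
N_{r^{\ast}}(E)\;\leq\;|\mathcal{I}_{\mathrm{sm}}|\;+\;C_d\,r^{\ast-d}\sum_{i\in\mathcal{I}_{\mathrm{lg}}}|U_i|^{d},
$$
for a constant $C_d$ depending only on $d$. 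On the other hand, using $|U_i|\geq\delta^{1/\theta}$ on $\mathcal{I}_{\mathrm{sm}}$ and $|U_i|^{s}\geq\delta^{s-d}|U_i|^{d}$ (which holds since $s\leq d$ and $|U_i|\leq\delta$) on $\mathcal{I}_{\mathrm{lg}}$,
$$
\sum_i|U_i|^{s}\;\geq\;\delta^{s/\theta}\,|\mathcal{I}_{\mathrm{sm}}|\;+\;\delta^{s-d}\sum_{i\in\mathcal{I}_{\mathrm{lg}}}|U_i|^{d}.
$$
The exponent defining $r^{\ast}$ was picked precisely so that the two ``cost per unit constraint'' rates $\delta^{s/\theta}$ and $\delta^{s-d}r^{\ast d}$ agree up to the constant $C_d$; minimizing the second display subject to the first (an elementary two-variable linear program) then delivers
$$
\sum_i|U_i|^{s}\;\geq\;C_d^{-1}\,N_{r^{\ast}}(E)\,\delta^{s/\theta}.
$$

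To conclude, I fix $\epsilon>0$ and use the definition of upper box dimension to obtain a sequence $r_n\to 0$ with $N_{r_n}(E)\geq r_n^{-(b-\epsilon)}$. Since $r^{\ast}(\delta)=\delta^{\kappa}$ is a fixed positive power of $\delta$, the equation $r^{\ast}(\delta_n)=r_n$ defines a sequence $\delta_n\to 0$ along which the previous bound combines with the box-dimension estimate to give $S^{s}_{\delta_n,\theta}(E)\geq c\,\delta_n^{\,s/\theta-(b-\epsilon)\kappa}$; an algebraic simplification shows that the exponent is non-positive exactly when $s\leq\theta d(b-\epsilon)/(d-(1-\theta)(b-\epsilon))$. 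Hence $\limsup_{\delta\to 0}\log S^{s}_{\delta,\theta}(E)/(-\log\delta)\geq 0$, and Remark~\ref{s leq uda} yields $s\leq\uda E$. Letting $s$ approach the threshold and $\epsilon\to 0$ gives the claim. The lower version (involving $\da$ and $\lbd$) is obtained identically, using the uniform estimate $N_r(E)\geq r^{-(\lbd E-\epsilon)}$ valid for all small $r$ and replacing $\limsup$ by $\liminf$. The main obstacle is pinpointing the correct intermediate scale $r^{\ast}$; once the exponent $1+(s/d)(1/\theta-1)$ is identified, the two inequalities automatically balance and the remainder is a routine calculation.
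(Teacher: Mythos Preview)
Your argument is correct. The paper itself does not prove this statement---it is quoted verbatim from \cite[Corollary~3.14]{Banaji} without proof---so there is no ``paper's proof'' to compare against. That said, your approach is sound and self-contained: the splitting scale $r^{\ast}=\delta^{1+(s/d)(1/\theta-1)}$ is exactly the one that equalises the two cost rates $\delta^{s/\theta}$ and $\delta^{s-d}r^{\ast d}$, and the linear-programming step reduces cleanly to $\sum_i|U_i|^{s}\geq C_d^{-1}\,N_{r^{\ast}}(E)\,\delta^{s/\theta}$. The subsequent algebra, verifying that $\kappa(b-\epsilon)-s/\theta\geq 0$ precisely when $s\leq \theta d(b-\epsilon)/\bigl(d-(1-\theta)(b-\epsilon)\bigr)$, is correct, and the appeal to Remark~\ref{s leq uda} closes the argument. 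The lower version follows identically using the uniform lower box-dimension estimate, as you note.

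In Banaji's paper the result is obtained as a special case of a more general interpolation inequality for $\Phi$-intermediate dimensions; your direct covering argument is more elementary and avoids that machinery, at the expense of being tailored to the specific power-law form of the bound.
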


\begin{lem}\cite[Proposition 2.3]{FFK} and \cite[Lemma 5.2]{Banaji}\label{Frostman lemma}
    Let $E$ be a compact subset of $\rd$, let $0<\theta\leq 1$, and suppose $0<s<\da E$. There exists a constant $c>0$ such that for all $r\in(0,1)$ we can find a Borel probability measure $\mu_r$ supported on $E$ such that for all $x\in\rd$ and $r\leq \delta \leq r\pot{\theta}  $,
    \be\label{frostman}\mu_{r}(B(x,\delta))\leq c\, \delta\pot{s}. \ee
    Analogously, if $0<s<\uda E$, we have that there exists a constant $c>0$ such that for all $r_0>0$ there exists $r\in(0,r_0)$ and a Borel probability measure $\mu_r$ supported on $E$ satisfying  \eqref{frostman} for $r\leq \delta\leq r^{\theta}$ and all $x\in\rd$.
\end{lem}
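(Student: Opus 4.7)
The plan is to translate the covering lower bound encoded by $s<\da E$ into a well-spread Borel probability measure on $E$ via a max-flow/min-cut duality on a finite dyadic tree. First, I would use the gap inequality in Lemma 2.1 to upgrade the hypothesis into a quantitative covering bound: applying it to the exponents $s$ and $\da E$ and passing to the liminf yields
\[
\liminf_{r\to 0}\frac{\log S^{s}_{r,\theta}(E)}{-\log r}\;\geq\;\theta(\da E - s)\;>\;0,
\]
so there are constants $c_0>0$ and $r_0\in(0,1)$ with $S^{s}_{r,\theta}(E)\geq c_0$ for every $r\in(0,r_0)$. For $r\in[r_0,1)$ the required inequality $\mu_r(B(x,\delta))\leq c\delta^s$ on $\delta\in[r,r^{\theta}]$ is automatic with $c=r_0^{-s}$ and any fixed probability measure on $E$, so the substantive step is handling $r\in(0,r_0)$.

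Fix such an $r$. I would build a finite directed graph whose vertices are the dyadic cubes meeting $E$ at scales comparable (up to a factor $\sqrt{d}$) to values in $[r,r^{\theta}]$, together with a source $o$ and a sink $\ast$: the source is linked to each cube at the coarsest scale, the cubes at the finest scale are linked to the sink, and internal edges join each cube to its dyadic parent. Assign to the edge entering a cube $Q$ the capacity $|Q|^s$. Every $o$-$\ast$ cut corresponds to a dyadic cover of $E$ by cubes of diameters in the required range; since dyadic covers and arbitrary covers of $E$ by sets of diameters in $[r,r^{\theta}]$ are comparable up to a constant $C_d$ depending only on $d$, the minimum cut value is at least $c_0/C_d$. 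The max-flow/min-cut theorem then furnishes a flow of this value respecting the capacities.

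I would next promote the flow to a measure. Distribute on each finest-scale cube $Q$ meeting $E$ a mass equal to the flow entering $Q$, placed (uniformly, say) on $E\cap Q$. The resulting Borel measure $\nu_r$ on $E$ has total mass at least $c_0/C_d$, and the capacity constraint gives $\nu_r(Q')\leq |Q'|^s$ for every dyadic $Q'$ at an intermediate scale. Since any ball $B(x,\delta)$ with $\delta\in[r,r^{\theta}]$ meets only a number, bounded by a constant depending on $d$, of dyadic cubes of diameter comparable to $\delta$, one obtains $\nu_r(B(x,\delta))\leq C'_d \delta^s$, and normalising $\mu_r:=\nu_r/\nu_r(E)$ delivers the claimed probability measure with a constant $c$ depending only on $d,s,\theta$ and $\da E$. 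The upper case is identical except that $s<\uda E$ produces $S^s_{r,\theta}(E)\geq c_0$ only along a sequence $r_n\to 0$, and the construction is performed at these scales.

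The main obstacle is the comparison between covers by arbitrary sets (as used in the definition of $S^s_{r,\theta}(E)$) and the dyadic covers arising as cuts in the flow graph: one must verify that both infima are equivalent up to a constant depending only on $d$ and that the diameter window $[r,r^{\theta}]$ is preserved up to such constants after dyadification. Once this standard comparison is handled, the remainder is routine bookkeeping of constants.
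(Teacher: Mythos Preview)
The paper does not supply its own proof of this lemma; it is quoted verbatim from \cite[Proposition~2.3]{FFK} and \cite[Lemma~5.2]{Banaji}, so there is no in-paper argument to compare against. Your proposal is essentially the standard proof of such Frostman-type statements and is the approach used in the cited sources: (i) turn $s<\da E$ into a uniform lower bound on $S^{s}_{r,\theta}(E)$ via the gap inequality (your use of Lemma~2.1 to get $\liminf_{r\to 0}\log S^{s}_{r,\theta}(E)/(-\log r)\geq\theta(\da E-s)>0$ is correct), and (ii) run a dyadic mass-distribution / max-flow--min-cut argument to manufacture a measure whose mass on cubes of the relevant scales is controlled by $|Q|^{s}$. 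The references phrase step~(ii) as an iterated mass redistribution on dyadic cubes rather than an explicit network-flow argument, but the two are equivalent. Your identification of the only nontrivial bookkeeping issue (passing between arbitrary covers with diameters in $[r,r^{\theta}]$ and dyadic covers at comparable scales, at the cost of a dimensional constant) is also accurate and matches what the cited proofs do.
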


We finish the preliminaries by recalling another family  of kernels $\tilde{\phi}^{s}_{r,\theta}$ on $\mathbb{R}^d$, defined for $0<r<1$, $\theta\in[0,1]$ and $0<s\leq m$ in \cite{Burrell_2021} and \cite{Brownian2}
$$\tilde{\phi}^{s}_{r,\theta}= \left\{ \begin{array}{lcc} 1 & if & |x|<r  \\ \\ \left(\frac{r}{|x|}\right)^s & if & r\leq |x|<r^\theta \\ \\ 0 & if & r^\theta \leq |x|.\end{array} \right.$$

These kernels are very important because they are useful in order to bound the sums of the diameters of coverings. The next result holds.
\begin{lem}\cite[Lemma 2.1]{Brownian2} \label{lemma intermediate dimension}
    Let $E\subset \rd$ be compact, $\theta \in (0,1]$, $0<r<1$ and $0\leq s\leq d$ and let $\mu\in\mathcal{M}(E)$. Then 
    $$S^{s}_{r,\theta}(E)\geq r^s \left[\iint \tilde{\phi}^{s}_{r,\theta}(x-y)d\mu(x)d\mu(y) \right]^{-1}.$$
\end{lem}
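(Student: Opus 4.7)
The plan is to prove this via the standard energy/Cauchy--Schwarz argument that relates efficient covers to the mass distribution of $\mu$. Fix an arbitrary admissible cover $\{U_i\}$ of $E$ with $r\leq |U_i|\leq r^{\theta}$; it suffices to show that
\[
\sum_i |U_i|^s \;\geq\; r^s \left[\iint \tilde{\phi}^{s}_{r,\theta}(x-y)\,d\mu(x)\,d\mu(y)\right]^{-1},
\]
since taking the infimum over such covers yields the conclusion.

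The first key step is to exploit the admissibility condition $r\leq|U_i|\leq r^{\theta}$ to get a uniform pointwise lower bound on $\tilde{\phi}^{s}_{r,\theta}$ on each $U_i\times U_i$. Indeed, if $x,y\in U_i$, then $|x-y|\leq|U_i|\leq r^{\theta}$, which keeps the argument inside the support of the kernel. In the regime $|x-y|<r$ the kernel equals $1\geq (r/|U_i|)^s$ (since $|U_i|\geq r$), while in the regime $r\leq|x-y|\leq|U_i|$ the kernel equals $(r/|x-y|)^s\geq (r/|U_i|)^s$. Consequently, setting $f(x):=\int \tilde{\phi}^{s}_{r,\theta}(x-y)\,d\mu(y)$, for every $x\in U_i$ one gets
\[
f(x)\;\geq\;\int_{U_i}\tilde{\phi}^{s}_{r,\theta}(x-y)\,d\mu(y)\;\geq\;\left(\frac{r}{|U_i|}\right)^{s}\mu(U_i).
\]

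The second step is the Cauchy--Schwarz inequality applied to $\sqrt{f(x)}$ and $1/\sqrt{f(x)}$ against $d\mu$, using $\mu(E)=1$:
\[
1\;=\;\left(\int d\mu\right)^{2}\;\leq\;\left(\int f\,d\mu\right)\left(\int \frac{1}{f}\,d\mu\right)\;=\;\left(\iint \tilde{\phi}^{s}_{r,\theta}(x-y)\,d\mu(x)\,d\mu(y)\right)\left(\int \frac{1}{f}\,d\mu\right).
\]
To finish, I would disjointify the cover by setting $V_i:=U_i\setminus\bigcup_{j<i}U_j$, so that $\{V_i\}$ partitions $E$ (up to a $\mu$-null set) and each $x\in V_i$ still lies in $U_i$, where the bound on $f$ from the first step applies. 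Then
\[
\int\frac{1}{f}\,d\mu\;=\;\sum_i\int_{V_i}\frac{1}{f}\,d\mu\;\leq\;\sum_i\frac{|U_i|^{s}}{r^{s}\mu(U_i)}\mu(V_i)\;\leq\;\frac{1}{r^{s}}\sum_i |U_i|^{s},
\]
which plugged into Cauchy--Schwarz gives the desired inequality after rearranging.

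The only genuine point to watch is the harmless case $\mu(U_i)=0$, where $V_i$ is $\mu$-null and the corresponding term contributes nothing; otherwise the argument is purely mechanical. The main conceptual obstacle is really the first step, that is, recognizing that the truncation scale $r^{\theta}$ built into $\tilde{\phi}^{s}_{r,\theta}$ is exactly matched with the upper bound $|U_i|\leq r^{\theta}$ in an admissible cover, so that the kernel is guaranteed to dominate $(r/|U_i|)^s$ on each $U_i\times U_i$; everything else is a standard application of Cauchy--Schwarz combined with disjointification.
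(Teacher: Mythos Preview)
The paper does not supply its own proof of this lemma; it is quoted without argument from \cite{Brownian2}, so there is no in-paper argument to compare against. Your proof is the standard potential-theoretic one (pointwise kernel lower bound on each covering set, Cauchy--Schwarz, disjointification) and is essentially what one finds in the literature for this type of estimate.

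One small caveat is worth recording. Your pointwise inequality $\tilde{\phi}^{s}_{r,\theta}(x-y)\geq (r/|U_i|)^{s}$ for $x,y\in U_i$ implicitly needs $|x-y|<r^{\theta}$ strictly, because the kernel as defined in the paper vanishes on $\{|x|\geq r^{\theta}\}$. If $|U_i|=r^{\theta}$ and $\mu$ puts mass on two points of $U_i$ at distance exactly $r^{\theta}$, the bound fails; in fact, $E=\{0,r^{\theta}\}\subset\mathbb{R}$ with $\mu$ uniform, $s=1$, $\theta=1/2$ and $r\in(1/4,1)$ gives a counterexample to the inequality as literally written here. This is almost surely just a transcription slip in the kernel definition---with the closed variant taking the value $(r/|x|)^{s}$ on $r\leq|x|\leq r^{\theta}$, both the lemma and your proof go through verbatim---but it would be worth a one-line remark.
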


\section{Results}
Our first result is a version of \textit{Marstrand's theorem} for quasi-Hausdorff dimension. If $\dim_\theta E$ is continuous at $\theta=0$, then our result recovers the classical Marstrand-–Mattila theorem. In general, however, we have
\[
\dim_H E \leq \lqh E \leq \uqh E,
\]
and these inequalities can be strict. 

\begin{theo}\label{main theorem}
Let $E\subset\rd$ be a compact set and $0\leq m\leq d$. Then
\[
\min\left\{ m,\lqh E\right\} = \lqh P_V E \quad \text{for } \gamma_{d,m}\text{-almost all } V\in G(d,m),
\]
and 
\[
\min\left\{ m,\uqh E\right\} = \uqh P_V E \quad \text{for } \gamma_{d,m}\text{-almost all } V\in G(d,m).
\]
\end{theo}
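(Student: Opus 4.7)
The plan is to leverage the projection formula for intermediate dimensions (Theorem~\ref{Projection theorem for intermediate dimension}) and pass to the limit $\theta\to 0^{+}$. That theorem supplies a $\gamma_{d,m}$-null set $\mathcal{N}\subset G(d,m)$ such that for every $V\in G(d,m)\setminus\mathcal{N}$ and every $\theta\in(0,1]$ one has $\da P_V E=\dap E$ and $\uda P_V E=\udap E$. Letting $\theta\to 0^{+}$ then gives
$$\lqh P_V E=\lim_{\theta\to 0^{+}}\dap E,\qquad \uqh P_V E=\lim_{\theta\to 0^{+}}\udap E,$$
so the theorem reduces to showing that these two limits equal $\min\{m,\lqh E\}$ and $\min\{m,\uqh E\}$, respectively.

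The upper bound is soft: for any good $V$, the inequality $\dap E=\da P_V E\le\min\{m,\da E\}$ holds because $P_V(E)\subset V\cong\mathbb{R}^{m}$ and $P_V$ is $1$-Lipschitz. Passing $\theta\to 0^{+}$ yields $\lim_{\theta\to 0^{+}}\dap E\le\min\{m,\lqh E\}$, and the same argument applies to $\udap E$.

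The heart of the proof, and the main obstacle, is the matching lower bound. Fix $0<s<\min\{m,\lqh E\}$. Since $\da E$ is monotone in $\theta$ with infimum $\lqh E$, we have $s<\da E$ for every $\theta\in(0,1]$, so Lemma~\ref{Frostman lemma} provides, for each $r\in(0,1)$, a Borel probability measure $\mu_r$ on $E$ satisfying $\mu_r(B(x,\delta))\le c\,\delta^{s}$ for all $r\le\delta\le r^{\theta}$. I would then estimate
$$\iint \phi_{r,\theta}^{s,m}(x-y)\,d\mu_r(x)\,d\mu_r(y)$$
by splitting the inner integral according to the three regimes of the kernel: on $|x-y|<r$ the Frostman bound contributes order $r^{s}$; on the annulus $r\le|x-y|\le r^{\theta}$ a dyadic decomposition combined with Frostman yields at most $Cr^{s}|\log r|$; and on the tail $|x-y|>r^{\theta}$ a dyadic decomposition using only the total-mass bound $\mu_r(\rd)=1$ yields order $r^{s(1-\theta)}$, which dominates the other two contributions for small $r$. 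Consequently
$$\liminf_{r\to 0}\frac{\log C_{r,\theta}^{s,m}(E)}{-\log r}\ge s(1-\theta),$$
and Remark~\ref{cota inferior} delivers $\dap E\ge s(1-\theta)$. Letting $\theta\to 0^{+}$ gives $\lim_{\theta\to 0^{+}}\dap E\ge s$, and finally letting $s\uparrow\min\{m,\lqh E\}$ completes the lower bound.

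The upper-dimension statement is proved along identical lines: the second clause of Lemma~\ref{Frostman lemma} produces Frostman measures along a subsequence $r_k\to 0$ for every $s<\uda E$, so the same capacity estimate together with the $\limsup$ version of Remark~\ref{cota inferior} yields $\udap E\ge s(1-\theta)$ for all $s<\uqh E$, and the argument concludes exactly as before.
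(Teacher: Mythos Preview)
Your overall strategy coincides with the paper's: reduce to the profile identity $\lim_{\theta\to 0}\dap E=\min\{m,\lqh E\}$, obtain the upper bound from $P_V$ being Lipschitz, and for the lower bound estimate the energy $\iint\phi_{r,\theta}^{s,m}\,d\mu_r\,d\mu_r$ against a Frostman measure, splitting into the three kernel regimes. The gap is in your tail estimate and in the inference you draw from it. Using only the total mass on $\{|x-y|>r^{\theta}\}$ you correctly obtain a contribution of order $r^{s(1-\theta)}$ and hence $\liminf_{r\to 0}\frac{\log C_{r,\theta}^{s,m}(E)}{-\log r}\ge s(1-\theta)$, but Remark~\ref{cota inferior} does \emph{not} give $\dap E\ge s(1-\theta)$ from this: the remark requires $\liminf\frac{\log C_{r,\theta}^{s',m}}{-\log r}-s'\ge 0$ to conclude $s'\le\dap E$, and you have not established this for $s'=s(1-\theta)$ (your capacity is computed at parameter $s$, not $s(1-\theta)$). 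In fact the kernel comparison $\phi^{s',m}_{r,\theta}\le r^{(1-\theta)(s'-s)}\phi^{s,m}_{r,\theta}$ shows that $h(s'):=\liminf\frac{\log C_{r,\theta}^{s',m}}{-\log r}$ is nondecreasing in $s'$ with slope at most $1-\theta$, so your bound $h(s)\ge(1-\theta)s$ is consistent with $h(s')=(1-\theta)s'$ for all $s'$, in which case the fixed point $\dap E$ would be $0$; your estimate therefore places no lower bound on $\dap E$ at all.

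The paper repairs this by using the hypothesis $s<\lqh E$ more aggressively on the tail. One picks $\epsilon>0$ with $s+\epsilon<\min\{m,\lqh E\}$; then $s+\epsilon<\underline{\dim}_{\theta_1}E$ for \emph{every} $\theta_1>0$, and Lemma~\ref{Frostman lemma} applied at a small $\theta_1<\theta$ (chosen so that $r^{\theta_1}>|E|$) yields a Frostman bound $\mu(B(x,u))\le c\,u^{s+\epsilon}$ valid on the whole range $r\le u\le |E|$, in particular across the tail. Because $s+\epsilon<m$, the integral $\int_{r^{\theta}}^{|E|}u^{s+\epsilon-m-1}\,du$ is controlled by its lower endpoint, giving $S_3\le C\,r^{s}$ rather than $r^{s(1-\theta)}$. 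One then has $\liminf\frac{\log C_{r,\theta}^{s,m}}{-\log r}\ge s$, Remark~\ref{cota inferior} legitimately delivers $\dap E\ge s$ for every $\theta\in(0,1]$, and sending $s\uparrow\min\{m,\lqh E\}$ completes the argument without ever losing the factor $(1-\theta)$.
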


\begin{proof}
We will prove the case of $\da E$; the argument for $\uda E$ is similar.  
We formulate the proof in terms of intermediate dimension profiles. That is, we show that  
\[
\min\left\{ m, \displaystyle\lim_{\theta \to 0} \da E \right\} = \lim_{\theta\to 0} \underline{\dim}_{\theta}^{m} E.
\]
Recall that the intermediate dimension profile gives the almost sure value of the intermediate dimension of orthogonal projections.

Since for all $0\leq m\leq d$, we have $\underline{\dim}_{\theta}^{m}E \leq m$ and $\underline{\dim}_{\theta}^{m}E \leq \da E$, it follows that $\displaystyle\lim_{\theta\to 0}\underline{\dim}^{m}_{\theta} E$ is always less than or equal to the left-hand side.

We now prove the reverse inequality. Assume, without loss of generality, that $|E|<1$.

Let $d(E):=\displaystyle\lim_{\theta\to 0} \da E$, and choose $t<\min\left\{ m,d(E)\right\}$ and $\epsilon>0$ small enough so that $t+\epsilon<\min\left\{ m,d(E)\right\}$.

Then there exists $\theta_0 >0$ such that $t+\epsilon < \da E$ for all $\theta\in (0,\theta_0)$.

Let $r>0$, $\theta\in (0,1)$, and choose $\theta_1 \in \left(0, \min\{\theta,\theta_0\} \right)$ such that $|E|<r^{\theta_1}$.

Then, by Lemma \ref{Frostman lemma}, there exists a constant $c>0$ and a probability measure $\mu$ supported on $E$ such that 
\be\label{frostman ineq} \mu(B(x,u))\leq c\, u^{t+\epsilon}\ee
for all $x\in \rd$ and $u\in[r,r^{\theta_1}]$.

Now, integrating the kernel $\phi_{r,\theta}^{t,m}$ with respect to $\mu$, and using $\indicator_{A}$ for the characteristic function of the set $A$, we have
\begin{align*}
    \left(C_{r,\theta}\pot{t,m}(E)\right)\pot{-1}&\leq \iint \phi_{r,\theta}^{t,m}(x-y)d\mu(y) d\mu (x)\\
    &=\int \mu(B(x,r))d\mu(x)+r^{t}\iint \frac{\indicator_{(B(x,r^{\theta})\setminus B(x,r))}(y)}{|x-y|^{t}} d\mu(y)d\mu(x) \\
    & \qquad
    + r^{\theta(m-t)+t} \iint \frac{\indicator_{(\rd\setminus B(x,r^{\theta}))}(y)}{|x-y|^{m}}d\mu(y) d\mu(x)\\
   &=S_1 + S_2 +S_3.
\end{align*}

By \eqref{frostman ineq}, we have 
$$S_1\leq c\, r^{t+\epsilon}<c\, r^{t}.$$

For $S_2$, using a change of variables and again \eqref{frostman ineq}, we obtain
\begin{align*}
     \int &\frac{\indicator_{(B(x,r^{\theta})\setminus B(x,r))}(y)}{|x-y|^{t}} d\mu(y)
     = \int  \mu\left\lbrace y:\frac{\indicator_{(B(x,r^{\theta})\setminus B(x,r))}(y)}{|x-y|^{t}}\geq u\right\rbrace du\\
     &=\int_0^{1/r^{\theta t}} \mu(B(x,r^{\theta})\setminus B(x,r)) du+ \int_{1/r^{\theta t}}^{1/r^{t}} \mu\left\lbrace y:\frac{1}{|x-y|^{t}}\geq u\right\rbrace du \\
     &\leq c\,r^{\theta \epsilon}+ t \int_{r}^{r^{\theta}}s^{-t-1} \mu(B(x,s)) ds
    \leq c\, r^{\theta \epsilon}+ \frac{c\,t}{\epsilon}\left( r^{\theta\epsilon }-r^{\epsilon} \right)\\
     &< c+ \frac{c\,t}{\epsilon},
\end{align*}

and hence 
$$S_2< \left( c+ \frac{c\, t}{\epsilon}\right) r^{t}.$$

For $S_3$, since $|E|<r^{\theta_1}$ and $\mu$ is supported on $E$, we have 
\begin{align*}
    & \int \frac{\indicator_{(\rd\setminus B(x,r^{\theta}))}(y)}{|x-y|^{m}} d\mu(y)\\
    &=\int_{0}^{1/r^{\theta m}} \mu (B(x,u^{-1/m})\setminus B(x,r^{\theta}))\, du\\
    &\leq \int_{0}^{1/|E|^{ m}} 1\, du + \int_{1/|E|^{ m}}^{1/r^{\theta m}} \mu (B(x,u^{-1/m}))\, du\\ 
    &= |E|^{-m} + m\int_{r^{\theta}}^{|E|} s^{-m-1}\mu (B(x,s))\, ds\\ 
    &\leq  |E|^{-m} + c\, m\int_{r^{\theta}}^{|E|} s^{t+\epsilon-m-1}\, ds\\ 
    & = |E|^{-m} + \frac{c\, m}{m-(t+\epsilon)} \left( r^{\theta(t+\epsilon - m)}-|E|^{\theta(t+\epsilon - m)}\right)\\ 
    & \leq |E|^{-m} + \frac{c\, m}{m-(t+\epsilon)} r^{\theta(t+\epsilon - m)}. 
\end{align*}

Therefore,
\begin{align*}
S_3&\leq |E|^{-m}r^{\theta(m-t) + t} +  \frac{c\, m}{m-(t+\epsilon)} r^{\theta\epsilon}r^{t}\\
&\leq \left( |E|^{-m} +  \frac{c\, m}{m-(t+\epsilon)} \right) r^t.
\end{align*}

Letting $C=3\max\left\{ c+\frac{c\, t}{\epsilon},  |E|^{-m} +  \frac{c\, m}{m-(t+\epsilon)} \right\}$, we obtain
\[
\left(C_{r,\theta}\pot{t,m}(E)\right)\pot{-1}\leq C\, r^t
\quad \text{which implies}
\quad 
\frac{\log C_{r,\theta}\pot{t,m}(E)}{- \log r} - t\geq \frac{\log C}{\log r} \ \text{for all} \ r.
\]
Taking the $\liminf$ of both sides yields
\[
t\leq \underline{\dim}_{\theta}^{m}E.
\]
Letting $t\to \min\left\{ m,d(E)\right\}$ completes the proof.
\end{proof}

\begin{theo}\label{main theo 2}
    Let $E\subset\rd$ be compact, $0<t< d$, and $\theta\in (0,1]$. Then:
    \begin{center}
        If $\underline{\dim}_\theta^t E = t$, then $t\leq \lqh E$,
    \end{center}
    and
    \begin{center}
        If $\overline{\dim}_\theta^t E = t$, then $t\leq \uqh E$.
    \end{center}
\end{theo}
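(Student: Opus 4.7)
The plan is to exploit a striking simplification of the profile kernels when the two dimensional indices coincide. Inspecting the definition of $\phi^{s,t}_{r,\theta}$ with $s=t$, the middle piece $(r/|x|)^t$ and the outer piece $r^{\theta(t-t)+t}/|x|^t=(r/|x|)^t$ collapse into a common expression, so that
\[
\phi^{t,t}_{r,\theta}(x) \;=\; \min\{1,\,(r/|x|)^t\},
\]
which is completely independent of $\theta$. Consequently $C^{t,t}_{r,\theta}(E)$ is itself $\theta$-independent, and via Remark~\ref{cota inferior} the hypothesis $\underline{\dim}_\theta^t E = t$ upgrades to the $\theta$-free statement that $\liminf_{r\to 0} \log C^{t,t}_{r,\theta'}(E)/(-\log r) \geq t$ for \emph{every} $\theta' \in (0,1]$, not only for the $\theta$ appearing in the hypothesis.

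Given this freedom, I would fix an arbitrary auxiliary $\theta' \in (0,1]$ and compare $\phi^{t,t}_{r,\theta'}$ to the truncated kernel $\tilde{\phi}^{t}_{r,\theta'}$ of Lemma~\ref{lemma intermediate dimension}: the two kernels agree on $|x|<r^{\theta'}$, while $\tilde{\phi}^{t}_{r,\theta'}$ vanishes for $|x|\geq r^{\theta'}$ and $\phi^{t,t}_{r,\theta'}$ remains strictly positive there. Thus $\phi^{t,t}_{r,\theta'} \geq \tilde{\phi}^{t}_{r,\theta'}$ pointwise, and integrating this against any minimising probability measure and invoking Lemma~\ref{lemma intermediate dimension} produces the clean estimate
\[
S^t_{r,\theta'}(E) \;\geq\; r^t\,C^{t,t}_{r,\theta'}(E).
\]

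Taking logarithms, dividing by $-\log r > 0$, and passing to $\liminf_{r\to 0}$ then gives $\liminf_{r\to 0}\log S^t_{r,\theta'}(E)/(-\log r) \geq -t + t = 0$, so Remark~\ref{s leq uda} delivers $t \leq \underline{\dim}_{\theta'} E$. Because $\theta' \in (0,1]$ was arbitrary, sending $\theta' \to 0$ yields $t \leq \lqh E$. The upper statement follows by the identical argument with $\liminf$ replaced by $\limsup$ throughout. I do not foresee any serious obstacle: the only conceptually nontrivial moment is noticing the $\theta$-invariance of $\phi^{t,t}_{r,\theta}$, after which the result is a clean synthesis of the already-stated energy lemma and the remark controlling intermediate dimensions from below via capacity growth.
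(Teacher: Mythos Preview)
Your argument is correct and considerably more direct than the paper's own proof. The paper proceeds indirectly via fractional Brownian motion: it writes $t=\alpha m$, invokes Theorem~\ref{dimension of Fractional Brownian motion} to conclude that $\underline{\dim}_\theta B_\alpha(E)=m$ almost surely, then uses Corollary~\ref{banajis bound} to force $\underline{\dim}_{\theta'} B_\alpha(E)=m$ for \emph{all} $\theta'$, and finally reads off $t=\alpha m\leq \underline{\dim}_{\theta'} E$ from the trivial bound $\underline{\dim}_{\theta'}^{m\alpha}E\leq \underline{\dim}_{\theta'}E$ together with a second application of Theorem~\ref{dimension of Fractional Brownian motion}. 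In other words, the paper passes through the existence of a stochastic process whose image has full box dimension and exploits Banaji's lower bound to propagate this across all $\theta'$.

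Your route avoids both of those black boxes entirely. The key observation---that $\phi^{t,t}_{r,\theta}=\min\{1,(r/|x|)^t\}$ is literally $\theta$-independent---is not made explicit in the paper, and it immediately transfers the capacity hypothesis to every $\theta'$. The comparison $\phi^{t,t}_{r,\theta'}\geq\tilde{\phi}^{t}_{r,\theta'}$ together with Lemma~\ref{lemma intermediate dimension} (applied to a minimiser, or an approximate minimiser, of the $\phi^{t,t}$-energy) then gives $S^t_{r,\theta'}(E)\geq r^t\,C^{t,t}_{r,\theta'}(E)$ directly, and Remark~\ref{s leq uda} finishes. This is more elementary, self-contained within the capacity framework, and makes transparent exactly why the parameter $\theta$ drops out. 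The paper's approach, by contrast, has the merit of illustrating that the profile--FBM correspondence is strong enough to run such arguments, but at the cost of importing heavier machinery. A minor point: you do not actually need Remark~\ref{cota inferior} for the first step, since the definition of the profile already gives $\liminf_{r\to0}\log C^{t,t}_{r,\theta}(E)/(-\log r)=t$ directly when $\underline{\dim}_\theta^t E=t$.
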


\begin{proof}
    Let $0<\alpha<1$ and $0<m\leq d$ such that $t=\alpha m$.

    Let $B_{\alpha}:\rd\to\mathbb{R}^m$ be the index-$\alpha$ fractional Brownian motion. Then, using Theorem \ref{dimension of Fractional Brownian motion},
    we have $$m=\da B_\alpha(E)\leq \lbd B_\alpha (E)\leq m,$$ almost surely, and hence $\lbd B_\alpha (E) = m$. Now, using Corollary \ref{banajis bound}, we obtain that $\da B_\alpha (E)= \lbd B_\alpha (E)=m$ for all $\theta \in (0,1)$. Using Theorem \ref{dimension of Fractional Brownian motion} again and the fact that $\underline{\dim}_\theta^sE \leq \da E$ for all $0\leq s\leq d$, we have for all $\theta\in(0,1)$,
    $$m=\da B_\alpha (E)\leq \frac{\da E}{\alpha}.$$
    Letting $\theta\to 0$, we have $$t=\alpha m\leq \displaystyle\lim_{\theta\to 0}\da E,$$
    which completes the proof for $\da$. The proof for the $\uda E$ part is analogous.
\end{proof}

We have the following corollary:
\begin{cor}\label{herramienta 2}
    Let $E\subset\rd$ be compact, $\theta\in(0,1]$, and $0<t<d$. Then:
    $$\underline{\dim}_\theta^t E = t\iff t\leq \lqh E$$
    and
    $$\overline{\dim}_\theta^t E = t\iff t\leq \uqh E.$$
\end{cor}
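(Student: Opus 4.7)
The statement is a biconditional, and the implication $\underline{\dim}_\theta^t E = t \Rightarrow t \leq \lqh E$ (respectively for the upper version) is \emph{exactly} what Theorem \ref{main theo 2} already establishes, so no new work is required in that direction. The remaining task is the reverse implication: assuming $t \leq \lqh E$ (resp.\ $t\leq \uqh E$), to deduce $\underline{\dim}_\theta^t E = t$ (resp.\ $\overline{\dim}_\theta^t E = t$). Since Definition \ref{intermediate idmension profile} already forces $\underline{\dim}_\theta^t E \leq t$, the content of the reverse direction is the lower bound $\underline{\dim}_\theta^t E \geq t$.

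My plan for this lower bound is to recycle the Frostman energy argument used in the proof of Theorem \ref{main theorem}, observing that its integrality hypothesis on $m$ is never actually used: the kernel $\phi_{r,\theta}^{s,t}$ and the associated capacity $C_{r,\theta}^{s,t}(E)$ are defined for any real $t>0$, and all estimates in that proof are purely algebraic. Concretely, I would fix $s < t$, choose $\epsilon>0$ so small that $s+\epsilon < t$ and $s+\epsilon < \lqh E$ (possible because $t\leq \lqh E$), and then pick $\theta_1 \in (0,\theta)$ with $s+\epsilon < \underline{\dim}_{\theta_1} E$ and $|E|<r^{\theta_1}$. Lemma \ref{Frostman lemma} then furnishes a Borel probability measure $\mu$ on $E$ with $\mu(B(x,u))\leq c u^{s+\epsilon}$ for all $u\in[r,r^{\theta_1}]$.

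I would then split $\iint \phi_{r,\theta}^{s,t}(x-y)\,d\mu(y)\,d\mu(x)$ into the three regions $|x-y|<r$, $r\leq|x-y|\leq r^{\theta}$, and $|x-y|\geq r^{\theta}$, exactly as in Theorem \ref{main theorem}. The first two contributions are bounded by a constant times $r^s$ by direct application of the Frostman inequality, and the third, after the layer-cake rewriting $\int \mu(B(x,u^{-1/t})\setminus B(x,r^\theta))\,du$, reduces to controlling $\int_{r^\theta}^{|E|} u^{s+\epsilon - t -1}\,du$, which is finite precisely because $s+\epsilon<t$. The resulting factor $r^{-\theta(t-s-\epsilon)}$ is absorbed by the kernel prefactor $r^{\theta(t-s)+s}$ to yield again an $O(r^s)$ bound, so $\big(C_{r,\theta}^{s,t}(E)\big)^{-1}\leq C r^s$.

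From this inequality I conclude $\liminf_{r\to 0} \tfrac{\log C_{r,\theta}^{s,t}(E)}{-\log r}\geq s$, and Remark \ref{cota inferior} upgrades this to $s\leq \underline{\dim}_\theta^t E$; letting $s\nearrow t$ gives the desired lower bound. The upper-profile case is strictly analogous, using the $\uda$ version of Lemma \ref{Frostman lemma} (which furnishes a suitable measure on a subsequence of radii $r$) and $\limsup$ in place of $\liminf$. I do not anticipate a real obstacle: the only point that requires care is ensuring the strict inequality $s+\epsilon<t$ so that the integral over the far region is controlled; this is automatic from choosing $\epsilon$ small after fixing $s<t$.
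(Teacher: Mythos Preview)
Your proposal is correct and follows essentially the same route as the paper: the forward implication is exactly Theorem~\ref{main theo 2}, and for the reverse implication the paper likewise invokes the proof of Theorem~\ref{main theorem}, observing (as you do) that nothing in that energy/capacity estimate uses the integrality of $m$, so it yields $\underline{\dim}_\theta^{t}E\geq\min\{t,\lqh E\}$ for real $t$, which together with the trivial bound $\underline{\dim}_\theta^{t}E\leq t$ gives the result.
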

\begin{proof}
    This corollary follows directly from the proof of Theorem \ref{main theorem}, Theorem \ref{main theo 2}, and the fact that $\displaystyle\lim_{\theta\to 0}\overline{\dim}^{t}_{\theta} E \leq \overline{\dim}^{t}_{\theta} E \leq t$ for all $\theta$, with the same holding when replacing $\overline{\dim}^{t}_{\theta}$ with $\underline{\dim}^{t}_{\theta}$.
\end{proof}

Using Theorem \ref{Projection theorem for intermediate dimension} and Theorem \ref{dimension of Fractional Brownian motion}, we obtain the following results about orthogonal projections and fractional Brownian motion.

The following corollary was already proved in the case of compact sets whose $\theta$-intermediate dimension is continuous at $\theta = 0$; see \cite[Corollary 6.4]{Burrell_2021}. Our result improves upon this by extending it to general compact sets.

\begin{cor}
    Let $E\subset\rd$ be compact and let $0<m<d$. Then:
    $$\lbd P_V (E) = m \mbox{ for } \gamma_{d,m}-\mbox{almost all } V\in G(d,m)\iff m\leq \lqh E$$
    and
    $$\ubd P_V (E) = m\mbox{ for } \gamma_{d,m}-\mbox{almost all } V\in G(d,m)\iff m\leq \uqh E.$$
\end{cor}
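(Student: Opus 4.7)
The plan is to reduce the corollary to an immediate two-step combination of results already available in the paper. Recall that $\underline{\dim}_1 = \lbd$ and $\overline{\dim}_1 = \ubd$, so by specializing Theorem \ref{Projection theorem for intermediate dimension} to $\theta = 1$, I obtain
\[
\lbd P_V(E) = \underline{\dim}_1^{m} E \quad \text{and} \quad \ubd P_V(E) = \overline{\dim}_1^{m} E
\]
for $\gamma_{d,m}$-almost all $V \in G(d,m)$. Thus the statement ``$\lbd P_V(E) = m$ for almost all $V$'' is equivalent to $\underline{\dim}_1^{m} E = m$, and analogously in the upper case.

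For the second step, I apply Corollary \ref{herramienta 2} with the real parameter $t = m$ and $\theta = 1$, which is legitimate since the hypothesis $0 < m < d$ gives $0 < t < d$. This yields
\[
\underline{\dim}_1^{m} E = m \iff m \leq \lqh E,
\]
and the analogous equivalence with $\overline{\dim}_1^{m}$ and $\uqh E$. Chaining the two equivalences proves both statements of the corollary.

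I do not expect any obstacle: all the real work was done in establishing Theorem \ref{main theorem}, Theorem \ref{main theo 2}, and Corollary \ref{herramienta 2}. The only minor point worth a sentence in the write-up is the observation that, for any compact $E$, the profile always satisfies $\underline{\dim}_\theta^{t} E \leq t$, so when the projection theorem forces the almost sure value of $\lbd P_V(E)$ to equal $m$, we really do get equality $\underline{\dim}_1^{m} E = m$ and can invoke Corollary \ref{herramienta 2} in both directions.
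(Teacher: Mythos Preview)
Your proposal is correct and matches the paper's own proof, which simply reads ``Combine Corollary \ref{herramienta 2} with Theorem \ref{Projection theorem for intermediate dimension}.'' Your write-up merely unpacks that combination explicitly at $\theta=1$, including the minor observation that $0<m<d$ places $t=m$ in the range required by Corollary \ref{herramienta 2}.
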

\begin{proof}
    Combine Corollary \ref{herramienta 2} with Theorem \ref{Projection theorem for intermediate dimension}.
\end{proof}

\begin{cor}
    Let $E\subset\rd$ be compact, $\theta\in(0,1]$, $0<m\leq d$, and $B_\alpha :\rd\to\mathbb{R}^m$ be an index-$\alpha$ fractional Brownian motion $(0<\alpha<1)$. Then:
    $$\da B_\alpha (E) = m \mbox{ almost surely } \iff \alpha m\leq \lqh E$$
    and
    $$\uda B_\alpha (E) = m \mbox{ almost surely }\iff \alpha m\leq \uqh E.$$
\end{cor}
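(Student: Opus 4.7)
The plan is to mimic the proof of the previous corollary (the projection version) and simply replace the appeal to Theorem~\ref{Projection theorem for intermediate dimension} with an appeal to Theorem~\ref{dimension of Fractional Brownian motion}. The statement is an \emph{if and only if}, so there is nothing qualitatively new to establish: Corollary~\ref{herramienta 2} already characterizes when a profile at level $t$ attains its maximum possible value $t$, and Theorem~\ref{dimension of Fractional Brownian motion} identifies the intermediate dimension of $B_\alpha(E)$ with a rescaled profile of $E$. The task reduces to chasing the definitions with $t = \alpha m$.

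First I would write $t := \alpha m$, which satisfies $0 < t \le d$ (we may assume $m > 0$ and $0 < \alpha < 1$). Apply Theorem~\ref{dimension of Fractional Brownian motion} with this choice to obtain, almost surely,
\[
\da B_\alpha(E) = \tfrac{1}{\alpha}\,\underline{\dim}_\theta^{\,\alpha m} E
\quad\text{and}\quad
\uda B_\alpha(E) = \tfrac{1}{\alpha}\,\overline{\dim}_\theta^{\,\alpha m} E.
\]
Since, by the definition of the intermediate dimension profiles, $\underline{\dim}_\theta^{\,\alpha m} E \leq \alpha m$ and $\overline{\dim}_\theta^{\,\alpha m} E \leq \alpha m$, the equalities $\da B_\alpha(E) = m$ and $\uda B_\alpha(E) = m$ are equivalent, respectively, to $\underline{\dim}_\theta^{\,\alpha m} E = \alpha m$ and $\overline{\dim}_\theta^{\,\alpha m} E = \alpha m$.

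Finally, I invoke Corollary~\ref{herramienta 2} with $t = \alpha m$ to translate these profile equalities into the quasi-Hausdorff conditions:
\[
\underline{\dim}_\theta^{\,\alpha m} E = \alpha m \iff \alpha m \leq \lqh E,
\qquad
\overline{\dim}_\theta^{\,\alpha m} E = \alpha m \iff \alpha m \leq \uqh E,
\]
which yields both equivalences in the statement. There is no genuine obstacle here: every ingredient is an earlier result of the paper, and the whole proof is a two-line composition. The only thing to be mildly careful about is that the ``almost surely'' in Theorem~\ref{dimension of Fractional Brownian motion} is asserted for each fixed $\theta$, which is exactly what the statement claims, so no uniformity in $\theta$ has to be addressed.
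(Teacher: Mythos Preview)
Your proof is correct and follows exactly the approach the paper takes: the paper's own proof is the single line ``Combine Corollary~\ref{herramienta 2} with Theorem~\ref{dimension of Fractional Brownian motion},'' which is precisely what you have spelled out. The only cosmetic point is that $t=\alpha m$ actually satisfies $0<t<d$ strictly (since $\alpha<1$ and $m\le d$), matching the hypothesis of Corollary~\ref{herramienta 2}.
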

\begin{proof}
    Combine Corollary \ref{herramienta 2} with Theorem \ref{dimension of Fractional Brownian motion}.
\end{proof}

Our next theorem is a lower bound for the $\theta$-intermediate dimension profile in terms of the quasi-Assouad spectrum and the Assouad dimension of the set. This result generalizes the result obtained in \cite{falconer2021assouad} for the upper and lower box dimensions.

\begin{theo}\label{Theo1}
    Let $\theta \in (0,1]$, $\alpha\in(0,1)$, and $E\subset\rn$ be bounded. Then, if $\underline{\dim}_{\theta}\pot{t} E<t$,
    we have
    \be\label{desigualdad assouad y intermedias}\underline{\dim}_{\theta}\pot{t} E\geq \da E-\max\lbrace 0,\dim_A\pot{\alpha}E-t,(\dim_A E-t)(1-\alpha)\rbrace.\ee
    And if $\overline{\dim}_{\theta}\pot{t} E<t$,
    we have
    $$\overline{\dim}_{\theta}\pot{t} E\geq \uda E-\max\lbrace 0,\dim_A\pot{\alpha}E-t,(\dim_A E-t)(1-\alpha)\rbrace.$$
\end{theo}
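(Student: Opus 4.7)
The plan is to use the capacity characterisation of the intermediate dimension profiles together with Frostman-type measures for $\underline{\dim}_\theta E$ and covering bounds coming from the Assouad spectrum and the Assouad dimension. Set $a=\dim_A^\alpha E$, $A=\dim_A E$, $d_\theta=\underline{\dim}_\theta E$, and $\eta=\max\{0,a-t,(1-\alpha)(A-t)\}$; we may assume $d_\theta>\eta$, else the conclusion is vacuous. By Remark~\ref{cota inferior}, it suffices to show that for every $s<d_\theta-\eta$ and all sufficiently small $r>0$ there is a probability measure $\mu_r$ on $E$ with $\iint \phi_{r,\theta}^{s,t}(x-y)\,d\mu_r(x)\,d\mu_r(y)\leq K r^s$; letting $s\nearrow d_\theta-\eta$ then gives the theorem. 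Pick $s'\in(s,d_\theta)$ and small $\epsilon>0$. Lemma~\ref{Frostman lemma} supplies $\mu_r$ with $\mu_r(B(x,u))\leq c\,u^{s'}$ on the range $r\leq u\leq r^\theta$. For $u>r^\theta$ we bootstrap: cover $B(x,u)\cap E$ by balls of radius $r^\theta$ and apply the defining inequalities of $\overline{\dim}_A^\alpha E$ and $\dim_A E$ (with exponents $a+\epsilon$, $A+\epsilon$) to obtain
\[
\mu_r(B(x,u))\leq C_0\, u^{a+\epsilon}\, r^{\theta(s'-a-\epsilon)}\text{ if }u\geq r^{\theta\alpha},\qquad \mu_r(B(x,u))\leq C_0\, u^{A+\epsilon}\, r^{\theta(s'-A-\epsilon)}\text{ if }r^\theta\leq u<r^{\theta\alpha}.
\]

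Decompose $\iint \phi_{r,\theta}^{s,t}(x-y)\,d\mu_r\,d\mu_r=I_1+I_2+I_3$ according to the three regions $|x-y|<r$, $r\leq |x-y|\leq r^\theta$, $|x-y|\geq r^\theta$ defining the kernel. The pieces $I_1$ and $I_2$ are $O(r^{s'})$ and $O(r^{s+\theta(s'-s)})$ respectively, by the layer-cake computations already carried out in the proof of Theorem~\ref{main theorem}, and both are $O(r^s)$ since $s'>s$. For $I_3$ we write
\[
\int_{|x-y|\geq r^\theta}|x-y|^{-t}\,d\mu_r(y)\leq |E|^{-t}+\int_{|E|^{-t}}^{r^{-\theta\alpha t}}\mu_r(B(x,\lambda^{-1/t}))\,d\lambda+\int_{r^{-\theta\alpha t}}^{r^{-\theta t}}\mu_r(B(x,\lambda^{-1/t}))\,d\lambda,
\]
substitute the two displayed estimates on each sub-integral, and multiply by the prefactor $r^{\theta(t-s)+s}$. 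A direct computation, split by the signs of $a+\epsilon-t$ and $A+\epsilon-t$ (since $\int\lambda^{-\gamma/t}\,d\lambda$ is dominated by the upper limit when $\gamma<t$ and by the lower limit when $\gamma>t$), produces worst-case exponents of $r$ of the form $s+\theta(s'-s-(a-t)_+)$ from the Assouad-spectrum sub-integral and $s+\theta(s'-s-(1-\alpha)(A-t)_+)$ from the full Assouad sub-integral; both are $\geq s$ precisely because $s<d_\theta-\eta$ once $\epsilon$ is small.

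Summing the three pieces gives $\iint\phi_{r,\theta}^{s,t}\,d\mu_r\,d\mu_r\leq Kr^s$ uniformly in small $r$, hence $\liminf_{r\to 0}\log C_{r,\theta}^{s,t}(E)/(-\log r)\geq s$, and Remark~\ref{cota inferior} finishes the lower case. The upper case is proved identically, replacing $\liminf$ by $\limsup$ and using the limsup branch of Lemma~\ref{Frostman lemma}. The main obstacle is the case analysis for $I_3$: there are four sub-cases according to whether $a+\epsilon$ and $A+\epsilon$ sit above or below $t$, and in each one must verify that the worst exponent of $r$ produced by the layer-cake integrals is captured by exactly one of the three arguments of the $\max$ defining $\eta$.
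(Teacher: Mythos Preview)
Your approach is essentially identical to the paper's: Frostman measures from Lemma~\ref{Frostman lemma}, the three-part decomposition of $\iint\phi_{r,\theta}^{s,t}\,d\mu_r\,d\mu_r$, and the crucial splitting of the tail integral at scale $r^{\theta\alpha}$ so that the Assouad spectrum governs the outer range and the full Assouad dimension the inner range; the resulting exponents and the appeal to Remark~\ref{cota inferior} match exactly. The only cosmetic differences are that the paper carries out the Assouad bounds inside the integral rather than first stating them as measure estimates $\mu_r(B(x,u))\leq C_0 u^{\bullet}r^{\theta(s'-\bullet)}$, and it separates out the borderline case $A=t$ explicitly (where a $\log r$ factor appears), whereas you fold this into your ``four sub-cases'' remark---be sure in a full write-up to treat that logarithmic case, as the paper does.
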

\begin{proof}
    If $\da E =0$, there is nothing to prove.
    If $\da E= \lbd E\leq t$ for all $\theta$, then
    $$\underline{\dim}_\theta^t E\geq \lim_{\theta\to 0}\underline{\dim}_\theta^t E=\min\{t,\lim_{\theta\to 0 }\da E\}=\lbd E=\da E,$$
    and the result follows.
    If $\da E= \lbd E> t$ for all $\theta$, then
    $$\underline{\dim}_\theta^t E\geq \lim_{\theta\to 0}\underline{\dim}_\theta^t E=\min\{t,\lim_{\theta\to 0 }\da E\}=t,$$
    which contradicts the hypothesis.
    
    So, suppose that $0<\da E <\lbd E$.
    
    Let $A>\dim_A E$ and for $\alpha\in (0,1)$, let $t\neq \dim_A\pot{\alpha}E$ and let $A_\alpha > \dim_A\pot{\alpha}E$ such that $A_\alpha\neq t$.
    
    By definition, there exists a constant $C>0$ such that for all $0<r<R$ and $x\in E$,
    $$N_r(B(x,R)\cap E)\leq C \left( \frac{R}{r}\right)\pot{A},$$
    and for all $0<r\leq R\pot{1/\alpha}<R$ and $x\in E$,
    $$N_r(B(x,R)\cap E)\leq C \left( \frac{R}{r}\right)\pot{A_\alpha}.$$
    
    Let $\theta\in (0,1)$. Let $s<\da E$ and
    $s'<s - \max\left\lbrace 0,A_\alpha - t, (A-t)(1-\alpha)\right\rbrace.$
    
    Note that $s'<\da E$.
    
    For $r\in (0,1)$, let $\mu_r^{s}$ be the Frostman measure of the set $E$ for $s$ of Lemma \ref{Frostman lemma}.
    
    Since $\mu_{r}\pot{s}$ are probability measures supported on $E$, we have
    \begin{align*}
        \left(C_{r,\theta}\pot{s',t}(E)\right)\pot{-1}&\leq \iint \phi_{r,\theta}^{s',t}(x-y)\, d\mu_r^{s}(y) \, d\mu_r^{s}(x)\\
        &=\int \mu_r^{s}(B(x,r))\, d\mu_r^{s}(x)+r^{s'}\iint \frac{\indicator_{(B(x,r^{\theta})\setminus B(x,r))}(y)}{|x-y|^{s'}} \,d \mu_r^{s}(y)\, d\mu_r^{s}(x) \\
        & \qquad
        + r^{\theta(t-s')+s'} \iint \frac{\indicator_{(\rn\setminus B(x,r^{\theta}))}(y)}{|x-y|^{t}}\, d\mu_r^{s}(y) \, d\mu_r^{s}(x)\\
        &=S_1 + S_2 +S_3
    \end{align*}
    respectively.
    
    By the Frostman condition of $\mu_r^{s}$, we have
    $$S_1\leq c\, r^{s}<c\, r^{s'}.$$
    
    For $S_2$, by a change of variable $u=t^{-1/s'}$, we have
    \begin{align*}
        &\int \frac{\indicator_{(B(x,r^{\theta})\setminus B(x,r))}(y)}{|x-y|^{s'}} \, d\mu_r^{s}(y)\\
        &= \int \mu_r^{s} \left\lbrace y:\frac{\indicator_{(B(x,r^{\theta})\setminus B(x,r))}(y)}{|x-y|^{s'}}\geq t\right\rbrace \, dt\\
        &=\int_0^{1/r^{\theta s'}} \mu_r^{s} (B(x,r^{\theta})\setminus B(x,r))\, dt+ \int_{1/r^{\theta s'}}^{1/r^{s'}} \mu_r^{s} \left\lbrace y:\frac{1}{|x-y|^{s'}}\geq t\right\rbrace\, dt \\
        &\leq cr^{\theta (s-s')}+ \int_{1/r^{\theta s'}}^{1/r^{s'}} \mu_r^{s} (B(x,(1/t)^{1/s'}))\, dt\\
        &=cr^{\theta (s-s')}+ s' \int_{r}^{r^{\theta}}u^{-s'-1} \mu_r^{s} (B(x,u)) \, du
        \\
        &\leq cr^{\theta (s-s')}+ \frac{cs'}{s-s'}\left( r^{\theta(s-s') }-r^{s-s'} \right)
        < c+ \frac{cs'}{s-s'},
    \end{align*}
    and then
    $$S_2< \left( c+ \frac{cs'}{s-s'}\right) r^{s'}.$$
    
    For $S_3$, we have
    \begin{align*}
        \int & \frac{\indicator_{(\rn\setminus B(x,r^{\theta}))}(y)}{|x-y|^{t}} \, d\mu_r^{s}(y)
        =\int_{0}^{\infty} \mu_r^{s}\left\lbrace y:\frac{\indicator_{(\rn\setminus B(x,r^{\theta}))}(y)}{|x-y|^{t}}\geq w \right\rbrace\, dw \\
        &=\int_{0}^{r^{-\theta t}} \mu_r^{s}(B(x,w^{-1/t}))\, dw
        =t\int_{r^{\theta}}^{\infty} u^{-t-1}\mu_r^{s}(B(x,u))\, du\\
        &=t\int_{r^{\theta}}^{r^{\alpha \theta}} u^{-t-1}\mu_r^{s}(B(x,u))\, du+t\int_{r^{\alpha\theta}}^{\infty} u^{-t-1}\mu_r^{s}(B(x,u))\, du\\
        &=S_{3,1} + S_{3,2}.
    \end{align*}
    
    For $S_{3,2}$, we have
    \begin{align*}
        &t\int_{r^{\alpha\theta}}^{\infty} u^{-t-1}\mu_r^{s}(B(x,u))\, du\\
        &=t\int_{r^{\alpha\theta}}^{|E|} u^{-t-1}\mu_r^{s}(B(x,u))\, du+t\int_{|E|}^{\infty} u^{-t-1}\mu_r^{s}(B(x,u))\, du\\
        &\leq t\int_{r^{\alpha\theta}}^{\infty} u^{-t-1}\left(\frac{u}{r^{\theta}}\right)^{A_\alpha}\mu_r^{s}(B(x,r^{\theta}))\, du+t\int_{|E|}^{\infty} u^{-t-1}\, du\\
        &\leq t\int_{r^{\alpha\theta}}^{|E|} u^{-t-1}\left(\frac{u}{r^{\theta}}\right)^{A_\alpha}\mu_r^{s}(B(x,r^{\theta}))\, du+|E|^{-t}\\
        &\leq t\int_{r^{\theta}}^{|E|} u^{-t-1}\left(\frac{u}{r^{\theta}}\right)^{A_\alpha}\mu_r^{s}(B(x,r^{\theta}))\, du +|E|^{-t}\\
        &\leq\frac{c\, t}{A_\alpha -t }r^{\theta(s-A_\alpha)}\left( |E|^{A_\alpha -t}-r^{\theta(A_\alpha -t)}\right) +|E|^{-t}\\
        &= \frac{c\, t|E|^{A_\alpha -t}}{A_\alpha -t }r^{\theta(s-A_\alpha)} -\frac{c\, t}{A_\alpha -t } r^{(s -t)} +|E|^{-t}.
    \end{align*}
    
    For $S_{3,1}$, first suppose that $t\neq A$. Then,
    \begin{align*}
        &t\int_{r^{\theta}}^{r^{\alpha \theta}} u^{-t-1}\mu_r^{s}(B(x,u))\, \, du\\
        &\leq t\int_{r^{\theta}}^{r^{\alpha\theta}} u^{-t-1}\left(\frac{u}{r^{\theta}}\right)^{A} \mu_r^{s}(B(x,r^{\theta}))\, \, du\\
        &\leq \frac{c\, t}{A-t}r^{\theta(s-A)}\left(r^{\alpha\theta(A-t)} - r^{\theta(A-t)} \right)\\
        &=\frac{c\, t}{A-t} r^{\theta(s-A)+\alpha\theta(A-t)} - \frac{c\, t}{A-t} r^{\theta(s-t)},
    \end{align*}
    and then
    \begin{equation*}
        \begin{array}{llll}
            S_3 &\leq r^{\theta(t-s')+s'}
            \bigg( \frac{c\, t|E|^{A_\alpha -t}}{A_\alpha -t }r^{\theta(s-A_\alpha)}& + \frac{c\, t}{A-t} r^{\theta(s-A)+\alpha\theta(A-t)}\\
            & & \hspace{-2cm} - \left(\frac{c\, t}{A-t} + \frac{c\, t}{A_\alpha -t }\right) r^{\theta(s-t)} + |E|^{-t}\bigg)\\
            &= \left( \frac{c\, t|E|^{A_\alpha -t}}{A_\alpha -t }\right)r^{\theta(s-(A_\alpha-t)-s')}r\pot{s'} & + \left(\frac{c\, t}{A-t}\right) r^{\theta(s-(1-\alpha)(A-t)-s')}r\pot{s'} \\
            & & \hspace{-2cm} - \left(\frac{c\, t}{A-t} + \frac{c\, t}{A_\alpha -t }\right) r^{\theta(s-s')}r\pot{s'} + |E|^{-t}\, r^{\theta(t-s')+s'}.\\
        \end{array}
    \end{equation*}
    So, by our choice of $s'$, we have
    $$s-(A_\alpha-t)-s'\geq 0\quad \text{ and }\quad s-(1-\alpha)(A-t)-s'\geq 0,$$
    and there exists a constant $C>0$ such that $$S_3\leq C r\pot{s'},$$ and then
    $$\left(C_{r,\theta}^{s',t}(E)\right)\pot{-1}\leq\iint \phi_{r,\theta}^{s',t}(x-y)\, d\mu_r^{s}(x) \, d\mu_r^{s}(y)\leq \left(1+ c+ \frac{cs'}{s-s'} + C\right) r^{s'}.$$
    This implies
    $$\liminf_{r\to 0}\frac{\log C_{r,\theta}^{s',t}(E)}{-\log r}-s'\geq 0,$$
    and finally, using Remark \ref{cota inferior},
    $$\underline{\dim}_{\theta}^{t}E\geq s'.$$
    
    Now, suppose that $A=t$. Then,
    \begin{align*}
        S_{3,1}&=t\int_{r^{\theta}}^{r^{\alpha \theta}} u^{-t-1}\mu_r^{s}(B(x,u))\, \, du
        \leq c\, t\int_{r^{\theta}}^{r^{\alpha \theta}} u^{-t-1} \left(\frac{u}{r\pot{\theta}}\right)^{t} r^{\theta s}\, \, du\\
        &<-c\, t r^{\theta(s-t)} \theta\log(r),
    \end{align*}
    and since $1\leq -\log(r)$ and proceeding as before,
    $$\left(C_{r,\theta}^{s',t}(E)\right)\pot{-1}\leq \iint \phi_{r,\theta}^{s',t}(x-y)\, d\mu_r^{s}(x) \, d\mu_r^{s}(y)\leq -C'\log(r) r^{s'}$$
    for some constant $C'>0$ independent of $r$, which implies
    $$\liminf_{r\to 0}\frac{\log C_{r,\theta}^{s',t}(E)}{-\log r}-s'\geq \liminf_{r\to 0} \frac{\log(\log(r))}{\log(r)}=0.$$
    As before, we conclude that
    $$\underline{\dim}_{\theta}^{t}E\geq s'.$$
    
    Finally, letting $A_\alpha\to \dim_A\pot{\alpha} E$, $A\to \dim_A E$, and $s\to\da E$, we obtain
    $$\underline{\dim}_{\theta}\pot{t} E>s'$$ for all $s'< \da E-\max\lbrace 0,\dim_A\pot{\alpha}E-t,(\dim_A E-t)(1-\alpha)$, which implies,
    \be\label{theo cuando t neq m}
    \underline{\dim}_{\theta}\pot{t} E\geq \da E-\max\lbrace 0,\dim_A\pot{\alpha}E-t,(\dim_A E-t)(1-\alpha)\rbrace.
    \ee
    The case $t=\dim_A\pot{\alpha}E$ follows from \eqref{theo cuando t neq m} and by the continuity in $t$ of $\underline{\dim}_{\theta}\pot{t} E$; see \cite[Corollary 3.8]{Brownian1}.
    The second part of the theorem, concerning the upper $\theta$-intermediate dimension, is similar.
\end{proof}

The next corollaries are immediate consequences of the above theorem. The case for $\theta =1$ was already proved in \cite[Corollary 1.3]{falconer2021assouad}.

\begin{cor}\label{cor for proj}
    Let $E\subset\rd$ be bounded and suppose that $\dim_{qA}E\leq \max\{m,\uda E\}$. Then,
    $$\uda P_V E=\min\{m,\uda E\},$$
    for all $\theta\in (0,1]$ and $\gamma_{n,m}-$almost all $V\in G(n,m)$. More generally,
    \be\label{ineq qA}\uda P_V E \geq \uda E -\max \left\lbrace 0, \dim_{qA}E - m \right\rbrace ,\ee
    for all $\theta\in (0,1]$ and $\gamma_{n,m}-$almost all $V\in G(n,m)$.
    The same conclusion holds with $\uda$ replaced with $\da$.
\end{cor}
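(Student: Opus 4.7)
The plan is to reduce the corollary to Theorem \ref{Theo1} specialized at $t=m$ and then translate the resulting bound on the profile to a bound on $\overline{\dim}_\theta P_V E$ via Theorem \ref{Projection theorem for intermediate dimension}. Concretely, I first dispose of the trivial case $\overline{\dim}_\theta^{m} E = m$ (where \eqref{ineq qA} is automatic since $\overline{\dim}_\theta P_V E \leq m$). In the remaining case $\overline{\dim}_\theta^{m} E < m$, Theorem \ref{Theo1} is available and gives, for every $\alpha \in (0,1)$,
$$\overline{\dim}_\theta^{m} E \;\geq\; \uda E \;-\; \max\bigl\{0,\;\dim_A^{\alpha} E - m,\;(\dim_A E - m)(1-\alpha)\bigr\}.$$
Letting $\alpha \to 1^{-}$, the third argument of the max tends to $0$ while $\dim_A^{\alpha} E \to \dim_{qA} E$ by the very definition of the quasi-Assouad dimension, so the max passes to $\max\{0, \dim_{qA} E - m\}$. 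This yields
$$\overline{\dim}_\theta^{m} E \;\geq\; \uda E \;-\; \max\{0,\;\dim_{qA} E - m\},$$
and combining with Theorem \ref{Projection theorem for intermediate dimension} (which asserts $\overline{\dim}_\theta P_V E = \overline{\dim}_\theta^{m} E$ for $\gamma_{d,m}$-a.e.\ $V$) produces the general inequality \eqref{ineq qA}.

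For the equality assertion I would distinguish two cases, exploiting the unconditional chain $\uda E \leq \ubd E \leq \dim_{qA} E$. If $\uda E \leq m$, then the hypothesis $\dim_{qA} E \leq \max\{m, \uda E\}$ collapses to $\dim_{qA} E \leq m$, whence the max in \eqref{ineq qA} vanishes and we obtain $\overline{\dim}_\theta P_V E \geq \uda E$; the reverse inequality is the standard Lipschitz bound for $P_V$, giving $\overline{\dim}_\theta P_V E = \uda E = \min\{m,\uda E\}$. If instead $\uda E > m$, the hypothesis forces $\dim_{qA} E \leq \uda E$, which combined with $\uda E \leq \dim_{qA} E$ pinches $\dim_{qA} E = \uda E$; then \eqref{ineq qA} reduces to $\overline{\dim}_\theta P_V E \geq \uda E - (\uda E - m) = m$, matched by the trivial upper bound $\overline{\dim}_\theta P_V E \leq m$ (since $P_V E$ lies in an $m$-dimensional subspace). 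The lower-intermediate-dimension statement is identical upon replacing $\uda$ with $\da$ throughout.

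The substantive content of the argument is entirely contained in Theorem \ref{Theo1}; the only delicate point I anticipate is justifying the interchange of $\lim_{\alpha\to 1}$ with the finite $\max$, but this is immediate because the three terms inside the max are continuous functions of $\alpha$. The remaining work is bookkeeping with the hypothesis $\dim_{qA} E \leq \max\{m, \uda E\}$ to see that it is exactly the right assumption to make $\max\{0,\dim_{qA} E - m\}$ absorb into $\uda E - \min\{m,\uda E\}$.
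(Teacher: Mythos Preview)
Your proposal is correct and follows exactly the route the paper takes: the paper's entire proof is the one line ``Let $\alpha\to 1$ in Theorem \ref{Theo1},'' and your argument is precisely this limit together with the bookkeeping (the case $\overline{\dim}_\theta^{m}E=m$, the translation via Theorem \ref{Projection theorem for intermediate dimension}, and the case split on $\uda E$ versus $m$) that the paper leaves implicit.
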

\begin{proof}
    Let $\alpha\to 1$ in Theorem \ref{Theo1}.
\end{proof}

From Theorem~\ref{main theorem}, the following corollary is immediate.

\begin{cor}\label{cor1}
    Let \( E \subset \mathbb{R}^n \) be a bounded set such that \( \overline{\dim}_\mathrm{B} E = \uqh E \). Then, for \(\gamma_{n,m}\)-almost every \( V \in G(n,m) \), we have
    \[
    \overline{\dim}_\mathrm{B} (P_V E) = \min\{m, \overline{\dim}_\mathrm{B} E\},
    \]
    and similarly,
    \[
    \underline{\dim}_\mathrm{B} (P_V E) = \min\{m, \underline{\dim}_\mathrm{B} E\}
    \]
    whenever \( \underline{\dim}_\mathrm{B} E = \lqh E \).
\end{cor}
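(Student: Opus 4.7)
The plan is to observe that Corollary~\ref{cor1} follows essentially immediately by sandwiching the upper box dimension of the projection between a classical upper bound and the value supplied by Theorem~\ref{main theorem}. The entire content of the corollary is the hypothesis pinning $\uqh E$ to $\ubd E$, which converts the projection statement about quasi-Hausdorff dimension into one about box dimension.

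For the upper bound I would invoke two standard facts: orthogonal projection is $1$-Lipschitz so $\ubd P_V E \leq \ubd E$, and $P_V E$ lies in the $m$-dimensional subspace $V$ so $\ubd P_V E \leq m$. Together, these give the inequality
\[
\ubd P_V E \;\leq\; \min\{m,\ubd E\}
\]
for \emph{every} $V\in G(n,m)$, with no exceptional set.

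For the lower bound I would chain the elementary inequality $\uda F \leq \ubd F$ (valid for every bounded $F\subset\mathbb{R}^n$ and every $\theta\in(0,1]$, recalled in the preliminaries) with the definition of $\uqh$ as a $\theta\to 0$ limit, obtaining $\uqh F \leq \ubd F$. Applied to $F=P_V E$ this gives $\uqh P_V E \leq \ubd P_V E$. Theorem~\ref{main theorem} identifies the left-hand side for $\gamma_{n,m}$-almost every $V$: $\uqh P_V E = \min\{m,\uqh E\}$. The hypothesis $\uqh E = \ubd E$ then upgrades this to $\min\{m,\ubd E\}$, matching the upper bound. The case of the lower box dimension is word-for-word identical, using $\da F \leq \lbd F$, the hypothesis $\lqh E = \lbd E$, and the lower-case half of Theorem~\ref{main theorem}. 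I do not expect any serious obstacle: Theorem~\ref{main theorem} does all the genuine work, and the corollary is essentially a translation exercise under the continuity hypothesis at $\theta=0$ encoded by $\uqh E = \ubd E$ (respectively $\lqh E = \lbd E$).
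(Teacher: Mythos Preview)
Your argument is correct and is exactly the ``immediate'' deduction from Theorem~\ref{main theorem} that the paper has in mind: the paper gives no further proof beyond declaring the corollary immediate, and your sandwich between the trivial upper bound $\ubd P_V E \leq \min\{m,\ubd E\}$ and the lower bound coming from $\uqh P_V E \leq \ubd P_V E$ together with Theorem~\ref{main theorem} is precisely how one makes that claim explicit.
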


By \cite[Corollary 1.3]{falconer2021assouad}, we have that if $\ubd E=\dim_{qA}E$, then $\ubd P_V E=\min\{\ubd E, m\}$ for a.e. $V\in G(d,m)$, and the same holds for $\lbd$. A natural question is which hypothesis is more general: $\dim_B E=\dim_{qA}E$ or $\dim_B E=\dim_{qH} E$. 

In the following theorem, we will prove that if $\ubd E = \dim_{qA}E$, then $\uda E=\ubd E$, and if $\lbd E=\dim_{qA}E$, then $\da E=\lbd E$ for all $\theta \in (0,1]$. The converse is not generally true; in fact, it is not hard to construct a Cantor set $C\subset \mathbb{R}$ such that $\dim_H C=\lbd C< \dim_{qA }C$. Therefore, Corollary \ref{cor1} is more general than the result obtained in \cite{falconer2021assouad}.

\begin{theo}\label{Angelini ineq}
    Let $E\subset\rd$ be bounded. Then, if $\lbd E = \dim_{qA}E$, we have
    $$\da E = \lbd E\quad \forall\ \theta\in (0,1],$$ 
    and if $\ubd E = \dim_{qA}E$, then 
    $$\uda E = \ubd E\quad \forall\ \theta\in (0,1].$$
\end{theo}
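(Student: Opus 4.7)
The plan is to prove $\da E \geq \lbd E$, which combined with the standard inequality $\da E \leq \lbd E$ yields the first claim; the upper version follows by the same argument with $\limsup$ replacing $\liminf$. First, observe that the hypothesis $\lbd E = \dim_{qA} E$ automatically forces $D := \lbd E = \ubd E = \dim_{qA} E$, since $\ubd E \leq \dim_{qA} E$, and the case $\theta = 1$ is immediate from $\underline{\dim}_1 E = \lbd E$. Fix then $\theta \in (0,1)$ and $s \in (0,D)$; by Remark~\ref{s leq uda}, it suffices to prove $\liminf_{r\to 0}\log S^s_{r,\theta}(E)/(-\log r) \geq 0$.

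The central step is a direct covering estimate. Fix $\eta > 0$ and $\alpha \in (\theta, 1)$, both to be chosen later. Since $\overline{\dim}_A^\alpha E \leq \dim_{qA} E = D$, the quasi-Assouad condition provides a constant $C_0$ such that $N_r(B(x,R) \cap E) \leq C_0 (R/r)^{D+\eta}$ whenever $r \leq R^{1/\alpha}$. Fix also $A > \dim_A E$, finite since $E$ is bounded. Given any admissible cover $\{U_i\}$ of $E$ with $r \leq |U_i| \leq r^\theta$, partition into $L = \{i : |U_i| \geq r^\alpha\}$ and $M = \{i : |U_i| < r^\alpha\}$. For $i \in L$, the quasi-Assouad bound applies and yields $N_r(U_i \cap E) \leq C_0(|U_i|/r)^{D+\eta}$; the inequality $|U_i|^{D+\eta} \leq r^{\theta(D+\eta - s)}|U_i|^s$ (from $|U_i| \leq r^\theta$) gives $\sum_{i\in L} N_r(U_i \cap E) \leq C_0 r^{-(D+\eta)(1-\theta) - \theta s} S$, where $S := \sum_i |U_i|^s$. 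For $i \in M$, the admissibility bound $|U_i| \geq r$ forces $|M| \leq S\,r^{-s}$, while the plain Assouad bound gives $N_r(U_i \cap E) \leq C_1 (|U_i|/r)^A \leq C_1 r^{-A(1-\alpha)}$; together these yield $\sum_{i\in M} N_r(U_i \cap E) \leq C_1 S r^{-A(1-\alpha) - s}$.

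Adding the two contributions and comparing with the lower box-dimension estimate $N_r(E) \geq r^{-t}$ (valid for $r$ small and any $t < D$) produces
\[
S \geq \frac{1}{C_0 + C_1}\, r^{T - t}, \qquad T := \max\bigl\{(D+\eta)(1-\theta) + \theta s,\ A(1-\alpha) + s\bigr\}.
\]
Elementary algebra shows $T < D$ whenever $\eta < \theta(D-s)/(1-\theta)$ and $1-\alpha < (D-s)/A$, so we fix such $\eta, \alpha$ and choose $t \in (T, D)$. The resulting exponent $T - t$ is strictly negative, so $\log S^s_{r,\theta}(E)/(-\log r) \geq (t - T) + o(1) > 0$, and the liminf is positive. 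Hence $s \leq \da E$; letting $s \nearrow D$ gives $\da E \geq D$, completing the first claim. The upper version runs identically along a subsequence $r_n \to 0$ on which $N_{r_n}(E) \geq r_n^{-t}$, producing $\limsup$ in place of $\liminf$.

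The principal difficulty is the "small" regime $|U_i| \in [r, r^\alpha)$, where the quasi-Assouad hypothesis $r \leq |U_i|^{1/\alpha}$ fails and one must revert to the cruder Assouad bound with exponent $A$. The trick is that this regime becomes arbitrarily narrow as $\alpha \nearrow 1$, so the resulting loss $r^{-A(1-\alpha)}$ is absorbed into the slack $D - s$ by choosing $\alpha$ close enough to $1$; this is exactly what makes the exponent $A(1-\alpha) + s$ in $T$ strictly less than $D$.
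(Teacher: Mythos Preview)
Your proof is correct, and it takes a genuinely different route from the paper's argument. The paper proceeds via the potential-theoretic machinery used throughout: it picks a Frostman measure $\mu_r$ (from Lemma~\ref{Frostman lemma}) associated with a parameter $\rho$ close to~$1$, integrates the truncated kernel $\tilde{\phi}^{t}_{r,\theta}$ against $\mu_r\times\mu_r$, and splits the resulting energy integral over three ranges of $|x-y|$ (where, respectively, the Frostman bound, the full Assouad bound, and the Assouad-spectrum bound apply); Lemma~\ref{lemma intermediate dimension} then converts the energy estimate into a lower bound for $S^{t}_{r,\theta}(E)$. Your argument bypasses measures and kernels entirely: you work directly with an admissible cover $\{U_i\}$, split it according to whether $|U_i|\ge r^{\alpha}$, refine each $U_i$ to scale $r$ using the quasi-Assouad bound (large pieces) or the Assouad bound (small pieces), and compare the total with $N_r(E)$. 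Both proofs hinge on the same structural observation---the quasi-Assouad exponent $D$ controls all but a vanishingly thin range of scales near $r$, and the loss from the crude Assouad exponent on that thin range is absorbed by choosing $\alpha$ close to~$1$---but your packaging is more elementary and transparent, while the paper's version is more consonant with the capacity framework used for the other results. One small point worth noting explicitly in your write-up: the trivial case $D=0$ (where there is no $s\in(0,D)$ to pick) should be disposed of at the outset, as the paper does.
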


\begin{proof}
    We will prove the first case. The case of $\uda$ is analogous.\\
    If $\da E = 0$ for some $\theta$, then using Lemma \ref{banajis bound}, we have that the intermediate dimension is constantly $0$. So suppose that $0<\da E$.
    By \cite[Corollary 2.8]{banaji2021attainable}, we have that if $\da E=\dim_A E$ for some $\theta$, then the intermediate dimensions are constant and equal to the Assouad dimension. So suppose that $\da E < \dim_A E$ for all $\theta\in(0,1]$.   

    The idea of the proof is to integrate appropriate kernels $\tilde{\phi}_{r,\theta}^{t}$ with respect to a Frostman measure and show that this integral is bounded above.

    Let $0< \theta< 1$ and $r\in (0,1)$. Suppose that $\lbd E=\dim_{qA}E$. Then we have $\lbd E = \dim_{A}^{\alpha}E$ for all $\alpha\in [0,1]$. Using the continuity of the intermediate dimensions, choose $\rho$ and $\alpha$ sufficiently close to $1$ such that $\underline{\dim}_{\rho}E-\dim_A E (1-\alpha)>0$ and $\rho\underline{\dim}_{\rho}E-\dim_{A}^{\alpha}E(\rho-\theta)>0$.\\
    Now let $d_{\rho},d_{\theta}, A_{\alpha}, A_{\rho}, A$ be such that $0<d_\rho < \underline{\dim}_{\rho}E<A<\dim_A E$, $0<d_\theta < \underline{\dim}_{\theta}E$, $0<A_\alpha < \dim^{\alpha}_{A}E$, and $0<A_\rho <\dim_A^{\rho}E$, and satisfies  
    $$0<t:=\min\left\{ \frac{d_\rho - A\, (1-\alpha)}{\alpha},\frac{\rho\, d_{\rho}-A_\alpha\, (\rho-\theta)}{\theta} \right\}.$$
    Note that $t<d_\rho$. By the definition of the Assouad dimension and the Upper Assouad spectrum, there exists $C>0$ such that for all $0<r<R<1$ and $x\in E$, 
    \begin{equation}\label{Assouad} N_r(B(x,R)\cap E)\leq C \left(\frac{R}{r}\right)^A,\end{equation}
    and for all $0<r\leq R^{1/\alpha}<R<1$ and $x\in E$, 
    \begin{equation}\label{Quasi assouad 2}N_r(B(x,R)\cap E)\leq C \left(\frac{R}{r}\right)^{A_\alpha},\end{equation}
    and finally, for all $0<r\leq r^{1/\rho}<R<1$ and $x\in E$, 
    \begin{equation}\label{Quasi assouad}N_r(B(x,R)\cap E)\leq C \left(\frac{R}{r}\right)^{A_{\rho}} .\end{equation}

    Let $\mu_r$ be a Borel probability measure supported on $E$ that satisfies the condition that there exists $C'>0$ such that
    $$\mu_r(B(x,\delta))\leq C\, \delta^{d_{\rho}}$$
    for all $\delta\in [r,r^{\rho}]$ and $x\in \rd$. 
    We will now integrate the kernels $\widetilde{\phi}_{r,\theta}^{t}$ with respect to $\mu_r$ and bound this integral essentially by $r^{t-(\rho-\theta)(A_\rho - t)}$.

    Then,
    \begin{align*}
        &\iint \tilde{\phi}^{t}_{r,\theta}(x-y)\,d\mu_r(x)d\mu_r(y)\\
        &=\iint \mu_r (B(x,r))\,d\mu_r(x) +r^t \, \iint \frac{\indicator_{(B(x,r^\theta)\setminus B(x,r))}}{|x-y|^t}\, d\mu_r(x)d\mu_r(y)\\
        &=S_1 + S_2,
    \end{align*}
    respectively.\\
    By the Frostman condition of $\mu_r$ and since $t<d_{\rho}$, we have
    $$S_1 \leq C'\, r^{d_{\rho}}<C'\, r^{t} \leq C'r^{t - (\rho-\theta)(A_{\rho}-t)}.$$

    For $S_2$, we will use inequality $\eqref{Quasi assouad}$ and the Frostman condition on $\mu_r$ since $t<d_{\rho}$. We have
    \begin{align*}
        \int &\frac{\indicator_{(B(x,r^\theta)\setminus B(x,r))}}{|x-y|^t}\,d\mu_r (y)\\
        &= \int_{0}^{\infty} \mu_r \left(\left\{y:\frac{\indicator_{(B(x,r^\theta)\setminus B(x,r))}}{|x-y|^t}\geq u \right\}\right) \,du\\
        &= \int_{0}^{1/r^{\theta t}} \mu_r \left(B(x,r^\theta)\setminus B(x,r)\right) \,du+\int_{1/r^{\theta t}}^{1/r^t} \mu_r \left(B(x,u^{-1/t})\right) \,du\\
        &\leq \int_{0}^{1/r^{\theta t}} \mu_r \left(B(x,r^\theta)\right) \,du+t \int_{r}^{r^{\theta}} u^{-1-t}\mu_r \left(B(x,u)\right) \,du\\
        &\leq C \left(\frac{r^\theta}{r^{\rho}}\right)^{A_{\rho}}\int_{0}^{1/r^{\theta t}} \mu_r \left(B(x,r^\rho)\right) \,du+t \int_{r}^{r^{\theta}} u^{-1-t}\mu_r \left(B(x,u)\right) \,du\\
        &\leq C'C \left(\frac{r^\theta}{r^{\rho}}\right)^{A_{\rho}}\int_{0}^{1/r^{\theta t}} r^{\rho d_{\rho}} \,du+t \int_{r}^{r^{\theta}} u^{-1-t}\mu_r \left(B(x,u)\right) \,du\\
        &\leq C'C r^{(\rho-\theta)(t-A_{\rho})} +t \int_{r}^{r^{\theta}} u^{-1-t}\mu_r \left(B(x,u)\right) \,du.
    \end{align*}
    To bound the last term of the sum, we will partition the interval of integration $I = (r,r^{\theta})$ into three parts. One interval in which we can apply the Frostman condition of $\mu_r$, and for the remaining two intervals, we will utilize the properties of the Quasi-Assouad spectrum and Assouad dimension. Namely, 
    $$I = (r,r^{\theta})=(r,r^{\rho})\cup [r^{\rho},r^{\alpha\rho})\cup [r^{\alpha\rho},r^{\theta}).$$
    Hence, 
    $$t \int_{r}^{r^{\theta}} u^{-1-t}\mu_r \left(B(x,u)\right) \,du = S_{2,1}+S_{2,2}+S_{2,3}.$$

    Using the Frostman condition, it is immediate that $$S_{2,1}\leq \frac{t}{d_{\rho}-t}r^{\rho(d_{\rho}-t)}\leq \frac{t}{d_{\rho}-t}.$$ For $S_{2,2}$, we use inequality \eqref{Assouad}, the Frostman condition for $\mu_r$, and the fact that $t\leq \frac{d_{\rho} -A(1-\alpha)}{\alpha}$. We have
    \begin{align*}
        t \int_{r^{\rho}}^{r^{\alpha\rho}} u^{-1-t}\mu_r \left(B(x,u)\right) \,du &\leq C t \int_{r^{\rho}}^{r^{\alpha\rho}} u^{-1-t} \left( \frac{u}{r^{\rho}}\right)^A \mu_r \left(B(x,r^{\rho})\right) \,du\\
        &\leq \frac{C C't}{A-t} r^{-\rho (A- d_{\rho})} r^{(\alpha\rho)(A-t)}\leq\frac{C C't}{A-t}.
    \end{align*}
    Finally, for $S_{2,3}$, we need to use inequality \eqref{Quasi assouad 2} and the fact that $t\leq \frac{\rho\, d_{\rho} - A_{\alpha}(\rho-\theta)}{\theta}$ to obtain
    $$t \int_{r^{\alpha\rho}}^{r^{\theta}} u^{-1-t}\mu_r \left(B(x,u)\right) \,du\leq \frac{C C't}{A-t}.$$
    Choosing $M=3\max\{\frac{C C't}{A-t}, \frac{t}{d_{\rho}-t}, CC'\}$, we have
    $$S_2\leq r^t\, M\, \left(r^{(\rho-\theta)(t-A_{\rho})} + 1\right)\leq 2M r^{t - (\rho-\theta)(A_{\rho}-t)}.$$

    Therefore, 
    $$\iint \tilde{\phi}^{t}_{r,\theta}(x-y)\,d\mu_r(x)d\mu_r(y)\leq 2M r^{t - (\rho-\theta)(A_{\rho}-t)}.$$

    By Lemma \ref{lemma intermediate dimension}, this implies
    $r^{(\rho-\theta)(A_{\rho}-t)} \leq S_{r,\theta}^{t}(E)$. Taking logarithms and letting $r\to 0$, we have
    $$(\rho-\theta)(t - A_{\rho})\leq \liminf_{r\to 0}\frac{\log S_{r,\theta}^{t}(E)}{-\log r}.$$
    Now, since by \cite[Lemma 2.1]{Burrell_2021} the right-hand side of the inequality is continuous in $t$, we can let $\alpha\to 1$, $d_{\rho} \to \underline{\dim}_{\rho} E$, and $A_{\rho}\to\dim_{A}^{\rho}E$, obtaining $t=\dim_{qA}E - \frac{\dim_{qA} E- \underline{\dim}_{\rho} E}{\theta}$. Finally, letting $\rho \to 1$ and using the assumption that $\lbd E=\dim_{qA} E$, we have 
    $$0\leq \displaystyle\liminf_{r\to 0}\frac{\log S_{r,\theta}^{\lbd E}(E)}{-\log r},$$ which implies $\lbd E\leq \da E$, and the result follows.
\end{proof}

Our last theorem is another lower bound for intermediate dimension profiles. This lower bound generalizes the one obtained in \cite[Proposition 2.12]{falconer2018capacity} for upper and box dimension profiles, although the result in that reference is stated in a different form. After rearranging the inequality presented there, one can recover a similar bound to the one we obtain here for intermediate $\theta$-dimensions. The approach to the proof is also closely related.

\begin{theo}\label{Theo2}
    Let $E\subset\rd$ be bounded, $\theta\in(0,1)$, and $t\leq s$. Then, 
    $$\underline{\dim}_{\theta}\pot{t}E\geq \frac{\underline{\dim}_{\theta}\pot{s}E}{1 + \left(\frac{1}{t} - \frac{1}{s}\right)\underline{\dim}_{\theta}\pot{s}E}$$
    and
    $$\overline{\dim}_{\theta}\pot{t}E\geq \frac{\overline{\dim}_{\theta}\pot{s}E}{1 + \left(\frac{1}{t} - \frac{1}{s}\right)\overline{\dim}_{\theta}\pot{s}E}.$$
\end{theo}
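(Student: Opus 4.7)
The plan is to adapt the potential-theoretic argument of Falconer (Proposition 2.12 of \cite{falconer2018capacity}) to the intermediate-profile kernels $\phi_{r,\theta}^{s,t}$. Assume $p := \underline{\dim}_{\theta}^{s} E > 0$, else the inequality is trivial. For any $0 < p' < p$, set $q' := p'/\bigl(1 + (1/t - 1/s)p'\bigr)$, which is equivalent to the clean identity
$$\frac{1}{q'} - \frac{1}{t} = \frac{1}{p'} - \frac{1}{s},$$
and note $0 < q' < p' \leq t$. It suffices to show $q' \leq \underline{\dim}_{\theta}^{t} E$ for every such $p'$ and then let $p' \uparrow p$.

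Since $p' < \underline{\dim}_{\theta}^{s} E$, the defining characterization of the profile together with Remark \ref{cota inferior} produces, for every sufficiently small $r$, a Borel probability measure $\mu_r$ supported on $E$ with
$$I^{p',s}_{r,\theta}(\mu_r) := \iint \phi^{p',s}_{r,\theta}(x-y)\, d\mu_r(x)\, d\mu_r(y) \leq r^{p'}.$$
The goal is to use this same $\mu_r$ to bound $I^{q',t}_{r,\theta}(\mu_r) \leq C r^{q'}$, which by Remark \ref{cota inferior} gives $q' \leq \underline{\dim}_{\theta}^{t} E$. I split the integral into the three regions dictated by $\phi^{q',t}_{r,\theta}$. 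On the inner region $|x-y| < r$, both kernels equal $1$, so the contribution is dominated by $I^{p',s}_{r,\theta}(\mu_r) \leq r^{p'} \leq r^{q'}$. On the middle region $r \leq |x-y| \leq r^\theta$, the pointwise identity $\phi^{q',t}_{r,\theta}(x-y) = \bigl[\phi^{p',s}_{r,\theta}(x-y)\bigr]^{q'/p'}$ combined with Jensen's/H\"older's inequality on the probability measure $\mu_r \times \mu_r$ yields a contribution bounded by $\bigl[I^{p',s}_{r,\theta}(\mu_r)\bigr]^{q'/p'} \leq r^{q'}$.

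For the outer region $|x-y| > r^{\theta}$, one factors $\phi^{q',t}_{r,\theta}(x-y)$ as a power of $\phi^{p',s}_{r,\theta}(x-y)$ multiplied by a pure $r$-factor and combines H\"older's inequality with a dyadic decomposition in $|x-y|$. The algebraic identity $1/q' - 1/t = 1/p' - 1/s$ is precisely what is needed for the resulting exponents to align and produce a bound of order $r^{q'}$. Summing the three contributions and invoking Remark \ref{cota inferior} gives $\underline{\dim}_{\theta}^{t} E \geq q'$, and sending $p' \to p$ completes the lower-profile version; the upper-profile version is identical after replacing $\liminf$ by $\limsup$ and constructing the near-optimal $\mu_r$ along the subsequence of radii on which the $\limsup$ is attained.

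The main obstacle is the outer region. A single use of H\"older based on the global pointwise bound $\phi^{q',t}_{r,\theta} \leq \bigl[\phi^{p',s}_{r,\theta}\bigr]^{t/s}$ produces only the weaker estimate $\underline{\dim}_{\theta}^{t} E \geq (t/s)\,\underline{\dim}_{\theta}^{s} E$; recovering the sharper inequality of the theorem requires the careful multi-scale/dyadic treatment of the outer region outlined above, following the structure of Falconer's Proposition 2.12 in \cite{falconer2018capacity}.
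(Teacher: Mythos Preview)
Your outline runs the comparison in the wrong direction, and the gap you flag in the outer region is not a technicality that a dyadic decomposition can close. You take a measure $\mu_r$ with small $(p',s)$-energy at scale $r$ and try to show it has small $(q',t)$-energy at the same scale. But on $|x-y|>r^\theta$ the $t$-kernel has the \emph{heavier} tail (since $t<s$), so the only information $\mu_r$ carries there is $\int\mu_r(B(x,\rho))\,d\mu_r(x)\le \min\{1,\,r^{-\theta(s-p')}\rho^{s}\}$. Feeding this into a layer-cake/dyadic computation of $r^{\theta(t-q')+q'}\iint_{|x-y|>r^\theta}|x-y|^{-t}\,d\mu_r\,d\mu_r$ gives at best an exponent $q'(1-\theta)+\theta\,\tfrac{t}{s}p'$, and since $q'>\tfrac{t}{s}p'$ (equivalently $p'<s$) this is \emph{strictly} smaller than $q'$. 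So no matter how you slice the outer region, your measure only yields the weaker bound $\underline{\dim}_\theta^{t}E\ge (1-\theta)q'+\theta\tfrac{t}{s}p'$, which for $\theta=1$ collapses to the $(t/s)$-estimate you already identified as insufficient. The identity $1/q'-1/t=1/p'-1/s$ plays no role here; it cannot compensate for the fact that control of the lighter-tailed $s$-kernel does not dominate the heavier-tailed $t$-kernel at large separations.

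The paper (and Falconer's Proposition~2.12) argue in the opposite direction: start from $d_t<\underline{\dim}_\theta^{t}E$, take the equilibrium measure $\mu$ for the $(d_t,t)$-kernel at a \emph{rescaled} radius $R$ with $R^{d_t}=r^{d_s}$ (where $d_s=d_t/\bigl(1+(1/s-1/t)d_t\bigr)$), and bound the $(d_s,s)$-energy of $\mu$ at scale $r$. Now the target $s$-kernel has the \emph{lighter} tail, so on the outer region the pointwise bound $|x-y|^{-s}=(|x-y|^{-t})^{s/t}$ together with H\"older (exponent $s/t>1$) and the equilibrium potential bound $R^{-d_t}\int\phi_{R,\theta}^{d_t,t}(x-y)\,d\mu(y)\le 1$ gives exactly $r^{d_s}$; the scale change $R$ is what makes the exponents line up. One then reads off $\underline{\dim}_\theta^{s}E\ge d_s$, and rearranging this inequality algebraically (the map $x\mapsto x/(1+cx)$ is increasing) yields the stated bound for $\underline{\dim}_\theta^{t}E$. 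To repair your argument you must reverse the roles of $s$ and $t$ and introduce the change of scale.
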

\begin{proof}
    This proof is inspired by the argument used in \cite[Proposition 2.12]{falconer2018capacity}, although the details differ due to the generality of the intermediate dimension setting.

    Let $d_t < \underline{\dim}_{\theta}^{t} E$. Then, we have 
    \begin{equation}\label{ineq theo 2}
        r^{-d_t} \leq C_{r,\theta}^{d_t,t}
    \end{equation}
    for all sufficiently small $r$.\\

    Let $d_s = \dfrac{d_t}{1 + \left(\frac{1}{s} - \frac{1}{t}\right) d_t}$, $r > 0$, and let $R \geq r$ be such that $R^{d_t / d_s} = r$, i.e., 
    \[
    R = r^{\frac{\theta s + d_s (1-\theta)}{\theta s + d_t \left(1 - \theta \frac{s}{t}\right)}}.
    \] 

    Let $\mu$ be a Borel probability measure supported on $E$ such that 
    \begin{equation}\label{phi=C}
        \int \phi_{r,\theta}^{d_t, t}(x - y)\, d\mu(y) = \left(C_{r,\theta}^{d_t, t}(E)\right)^{-1}
    \end{equation}
    for $\mu$-almost all $x$. Now, integrating, we get:
    \[
    \begin{split}
        \int \phi_{r,\theta}^{d_s,s}(x - y)\, d\mu(y) 
        &\leq \mu(B(x,R)) \\
        &\quad + \int_{R \leq |x - y| \leq R^\theta} \frac{r^{d_s}}{|x - y|^{d_s}}\, d\mu(y) \\
        &\quad + \int_{|x - y| > R^\theta} \frac{r^{\theta(s - d_s) + d_s}}{|x - y|^s}\, d\mu(y) \\
        &= S_1 + S_2 + S_3.
    \end{split}
    \]

    For $S_1$, using the definition of $R$, we obtain
    \begin{equation}\label{S1}
    \begin{split}
        \mu(B(x,R)) 
        &\leq R^{d_t} \left(R^{-d_t} \int \phi_{R,\theta}^{d_t, t}(x - y)\, d\mu(y)\right) \\
        &= r^{d_s} \left(R^{-d_t} \int \phi_{R,\theta}^{d_t, t}(x - y)\, d\mu(y)\right).
    \end{split}
    \end{equation}

    For $S_2$, by Hölder's inequality:
    \begin{equation}\label{S2}
    \begin{split}
        \int_{R \leq |x - y| \leq R^\theta} \frac{r^{d_s}}{|x - y|^{d_s}}\, d\mu(y)
        &\leq r^{d_s} \left(\int \frac{1}{|x - y|^{d_t}}\, d\mu(y)\right)^{d_s/d_t} \\
        &\leq r^{d_s} \left(R^{-d_t} \int \phi_{R,\theta}^{d_t, t}(x - y)\, d\mu(y)\right)^{d_s/d_t}.
    \end{split}
    \end{equation}

    Finally, for $S_3$, again using Hölder:
    \begin{equation}\label{S_3}
    \begin{split}
        \int_{|x - y| > R^\theta} \frac{r^{\theta(s - d_s) + d_s}}{|x - y|^s}\, d\mu(y) 
        &\leq r^{\theta(s - d_s) + d_s} R^{-\theta s} \left(\int \frac{R^{\theta t}}{|x - y|^t}\, d\mu(y)\right)^{s/t} \\
        &= r^{d_s} \left(R^{-d_t} \int \phi_{R,\theta}^{d_t, t}(x - y)\, d\mu(y)\right)^{s/t},
    \end{split}
    \end{equation}
    by our choice of $d_s$.

    Now, using \eqref{ineq theo 2}, \eqref{phi=C}, and combining \eqref{S1}, \eqref{S2}, and \eqref{S_3}, we get:
    \[
        \left(C_{r,\theta}^{d_s, s}(E)\right)^{-1} \leq \int \phi_{r,\theta}^{d_s,s}(x - y)\, d\mu(y) \leq 3r^{d_s}
    \]
    for all sufficiently small $r$. Hence,
    \[
        \liminf_{r \to 0} \frac{\log C_{r,\theta}^{d_s, s}(E)}{-\log r} - d_s \geq 0,
    \]
    which implies
    \[
        \underline{\dim}_{\theta}^{s} E \geq d_s = \frac{d_t}{1 + \left(\frac{1}{s} - \frac{1}{t}\right) d_t}.
    \]

    The result follows by letting $d_t \to \underline{\dim}_{\theta}^{t} E$. The case of the upper intermediate dimension profile is similar.
\end{proof}

\begin{rmk}
    The lower bounds in the previous theorem are attained for some sets \( E \), as shown in the next example.

    Let $0<s\leq t\leq 1$, $F_p=\left\lbrace \frac{1}{n\pot{p}} :n\in\mathbb{N} \right\rbrace$, and $\theta\in (0,1)$.
    By Theorem 1.1 in \cite{falconer2021intermediate} and Theorem 3.4 in \cite{Burrell_2021}, we have $\underline\dim_\theta\pot{s} F_p = \overline\dim_\theta\pot{s} F_p=\frac{s\theta}{\theta + sp}$ for any $s\in (0,1]$. Therefore, for $s\geq t$, we have
    \begin{align*}
        \frac{\underline{\dim}_{\theta}\pot{s}F_p}{1 + \left(\frac{1}{t} - \frac{1}{s}\right)\underline{\dim}_{\theta}\pot{s}F_p} &= \frac{s\theta}{\theta + sp}\left(1 + \left(\frac{1}{t} - \frac{1}{s}\right)\frac{s\theta}{\theta + sp}\right)^{-1}\\
        &= \left(\frac{ts\theta}{\theta + sp}\right)\left(\frac{t(\theta+sp)+s\theta-t\theta}{\theta+sp}\right)^{-1}\\
        &= \frac{t\theta}{\theta+tp} = \underline{\dim}_{\theta}^{t} F_P.
    \end{align*}
    Thus, the lower bound given in Theorem \ref{Theo2} is attained for this set.
\end{rmk}

Moreover, combining Theorem \ref{Theo2} and Corollary \ref{cor for proj}, we have for $t\leq\dim_{qA}E$,
\begin{equation*} \underline{\dim}_\theta^{t}E\geq\frac{\da E}{1+\left(\frac{1}{t} - \frac{1}{\dim_{qA}E}\right)\da E}\end{equation*}
and 
\begin{equation}\label{last ineq} \overline{\dim}_\theta^{t}E\geq\frac{\uda E}{1+\left(\frac{1}{t} - \frac{1}{\dim_{qA}E}\right)\uda E}.\end{equation}

One may wonder when \eqref{last ineq} improves inequality \eqref{ineq qA} for sets $E$ such that $0<\uda E<\dim_{qA}E$ and $0<t< \dim_{qA} E$, i.e., when 
\begin{equation}\label{question}\uda E-\left(\dim_{qA}E-t\right)< \frac{\uda E}{1+\left(\frac{1}{t} - \frac{1}{\dim_{qA}E}\right)\uda E}\leq\overline{\dim}_\theta^{t}E\end{equation}
holds. 

A direct algebraic manipulation of the last inequality implies that inequality \eqref{question} holds if and only if
\begin{align*}
    t^{2}\left(\dim_{qA}E-\uda E\right) &+ t\left(2\dim_{qA}E\uda E - (\uda E)^{2}-(\dim_{qA}E)^{2}\right)\\
    &+ \left(\dim_{qA}E(\uda E)^{2}-(\dim_{qA}E)^{2}\uda E\right) < 0.
\end{align*}

But this is true if and only if
$$t^{2}- t\left(\dim_{qA}E-\uda E\right)-\dim_{qA}E\uda E < 0.$$
This is a second-degree polynomial in $t$ with zeros at $t=-\uda E$ and $t=\dim_{qA}E$. Therefore, for all $t\in(0,\dim_{qA}E)$, inequality \eqref{question} holds for all sets with $0<\uda E<\dim_{qA}E$. 

We finish this work with a simple corollary that improves Corollary 3.8 in \cite{Brownian1}.
\begin{cor}\label{lipschitz cor}
    Let $E\subset\rd$ be bounded and $\theta\in (0,1]$. The functions $f,g:(0,d)\to [0,d]$ defined by $$f(t)=\underline{\dim}_\theta\pot{t}E$$ and $$g(t)=\overline{\dim}_\theta\pot{t}E$$ are Lipschitz functions.
\end{cor}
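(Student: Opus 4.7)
The plan is to combine the one-sided bound in Theorem~\ref{Theo2} with the fact that the profiles are non-decreasing in the superscript variable, then use the defining range constraint $f(t) \leq t$ (and $g(t) \leq t$) to extract a Lipschitz constant of $1$. I will argue the case of $f$; the argument for $g$ is identical with $\liminf$ replaced by $\limsup$ throughout.

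First, I would establish monotonicity: for $0 < t_1 \leq t_2 < d$ and $0 \leq s \leq t_1$, direct inspection of the piecewise definition of $\phi_{r,\theta}^{s,t}$ shows that the kernels $\phi_{r,\theta}^{s,t_1}$ and $\phi_{r,\theta}^{s,t_2}$ coincide on $\{|x| \leq r^{\theta}\}$, while on $\{|x| > r^{\theta}\}$
\[
\frac{\phi_{r,\theta}^{s,t_1}(x)}{\phi_{r,\theta}^{s,t_2}(x)} = \left(\frac{|x|}{r^{\theta}}\right)^{t_2 - t_1} \geq 1.
\]
Hence $\phi_{r,\theta}^{s,t_1} \geq \phi_{r,\theta}^{s,t_2}$ pointwise and therefore $C_{r,\theta}^{s,t_1}(E) \leq C_{r,\theta}^{s,t_2}(E)$ after inverting the infima of energies. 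Taking $s = f(t_1) \leq t_1 \leq t_2$ and using Definition~\ref{intermediate idmension profile}, I would obtain $\liminf_{r \to 0} \log C_{r,\theta}^{f(t_1),t_2}(E)/(-\log r) \geq f(t_1)$, and Remark~\ref{cota inferior} then yields $f(t_1) \leq f(t_2)$.

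Next, for $\theta \in (0,1)$ and $0 < t \leq s < d$, a direct rearrangement of the inequality in Theorem~\ref{Theo2} gives
\[
f(s) - f(t) \leq \frac{f(t)\, f(s)\,(s - t)}{t\, s}.
\]
Since $f(t) \leq t$ and $f(s) \leq s$ by Definition~\ref{intermediate idmension profile}, the factor $f(t)\,f(s)/(ts)$ is at most $1$, so $f(s) - f(t) \leq s - t$. Combined with the monotonicity above, this gives $|f(s) - f(t)| \leq |s - t|$, i.e., $f$ is Lipschitz with constant $1$ on $(0,d)$. For the boundary case $\theta = 1$ the profile coincides with the box dimension profile, so the Lipschitz property follows from \cite[Proposition~2.12]{falconer2018capacity}; alternatively, the proof of Theorem~\ref{Theo2} goes through without change when the middle annulus degenerates. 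The only genuinely nontrivial step is establishing monotonicity; the Lipschitz estimate itself then reduces to a short algebraic manipulation of Theorem~\ref{Theo2} together with the trivial bound $f(t) \leq t$.
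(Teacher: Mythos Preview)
Your proposal is correct and follows essentially the same route as the paper: rearrange the inequality of Theorem~\ref{Theo2} to obtain $f(s)-f(t)\leq \dfrac{f(t)f(s)}{ts}(s-t)$ and then invoke the trivial bound $f(t)\leq t$ to conclude the Lipschitz estimate with constant~$1$. You are in fact more thorough than the paper, which records only the one-sided bound $f(t)-f(s)\leq t-s$ and tacitly relies on monotonicity (presumably from \cite[Corollary~3.8]{Brownian1}) for the other direction, and which also does not address the endpoint $\theta=1$ separately even though Theorem~\ref{Theo2} is stated only for $\theta\in(0,1)$.
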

\begin{proof}
    For all $0<t\leq d$, we have 
    \begin{equation}\label{eq for lipschitz}\underline{\dim}_{\theta}\pot{t}E\leq t.\end{equation}
    Now, using Theorem \ref{Theo2} and \eqref{eq for lipschitz}, we have for $0<s\leq t$,
    $$\underline{\dim}_{\theta}\pot{t}E - \underline{\dim}_{\theta}\pot{s}E \leq \left(\frac{\underline{\dim}_{\theta}\pot{s}E \underline{\dim}_{\theta}\pot{t}E}{st}\right)(t-s)\leq t-s.$$
\end{proof}

\section*{Final remarks and broader applicability}

\subsection*{Broader applicability}
We chose to present our results for the case of orthogonal projections and fractional Brownian motion. However, in \cite{Brownian1}, it is shown that  the intermediate dimension profiles also  provide meaningful bounds for intermediate dimensions for a more general set of functions:
\begin{defi}
Let $(\Omega,\mathcal{F},P)$ be a probability space, and let $(\omega,x)\mapsto f_{\omega}(x)$ be a $\sigma(\mathcal{F}\times\mathcal{B})$-measurable function, where $\mathcal{B}$ denotes the Borel $\sigma$-algebra on $\mathbb{R}^d$. Let $\mathcal{F}_1 = \{f_\omega : E \to \mathbb{R}^m, \omega \in \Omega\}$ be a family of continuous functions, measurable with respect to $\sigma(\{F \times B : F \in \mathcal{F}, B \in \mathcal{B}\})$, that satisfies one of the following conditions:
\begin{enumerate}
    \item \label{f-lower} There exists a constant $c > 0$ such that for all $x, y \in E$ and $r > 0$,
    $$
    P\left(\left\{\omega : |f_\omega(x) - f_\omega(y)| \leq r \right\}\right) \leq c\, \phi_{r^{\gamma},\theta}^{m/\gamma, m/\gamma}(x - y),
    $$
    \item \label{label-upper} For all $\omega \in \Omega$, there exists a constant $c_\omega > 0$ such that for all $x, y \in E$,
    $$
    |f_\omega(x) - f_\omega(y)| \leq c_\omega |x - y|^{1/\gamma}.
    $$
\end{enumerate}
\end{defi}
Therefore all our results that involve intermediate dimension profiles can be directly applied to this more general setting, since in
\cite[Theorems 3.1 and 3.3]{Brownian1}, it is shown exactly how the intermediate dimension profiles control the dimension of images under functions in the family $\mathcal{F}_1$. 

Precisely: {\em 
Let $E \subset \mathbb{R}^d$ be compact, $\theta \in (0,1]$, $\gamma \geq 1$, and $m \in \mathbb{N}$. If $\mathcal{F}_1$ satisfies condition \eqref{f-lower}, then for $P$-almost all $\omega \in \Omega$, $f_\omega \in \mathcal{F}_1$ we have
\[
\underline{\dim}_\theta f_\omega(E) \geq \gamma\, \underline{\dim}_{\theta}^{m/\gamma} E \quad \text{and} \quad \overline{\dim}_\theta f_\omega(E) \geq \gamma\, \overline{\dim}_{\theta}^{m/\gamma} E.
\]
If $\mathcal{F}_1$ satisfies condition \eqref{label-upper}, then for all $\omega \in \Omega$, $f_\omega \in \mathcal{F}_1$ we have
\[
\underline{\dim}_\theta f_\omega(E) \leq \gamma\, \underline{\dim}_{\theta}^{m/\gamma} E \quad \text{and} \quad \overline{\dim}_\theta f_\omega(E) \leq \gamma\, \overline{\dim}_{\theta}^{m/\gamma} E.
\]}

\subsection*{Size of exceptional directions}

We now include two corollaries that quantify the \emph{size} of the set of exceptional directions where the projected dimension drops. These are direct consequences of our main theorem combined with Theorem \ref{exceptional directions}, and although the proofs are straightforward, we believe they are of independent interest and help to clarify the scope of the results.

\begin{cor}\label{exceptional 1}
    Let $E\subset\rd$ be compact and $m\geq\dim_{qA} E$. Then,
    $$
    \dimh\left\{V\in G(d,m):\uda P_V E<\uda E\right\}\leq m(d-m)-(m-\dim_{qA}E)
    $$
    and
    $$
    \dimh\left\{V\in G(d,m):\da P_V E<\da E\right\}\leq m(d-m)-(m-\dim_{qA}E).
    $$
\end{cor}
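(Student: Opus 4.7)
The plan is to apply Theorem~\ref{exceptional directions} with the specific choice $\lambda = \dim_{qA} E$, which is admissible because the hypothesis $m \geq \dim_{qA} E$ guarantees $0 \leq \lambda \leq m$. The whole argument then reduces to identifying the profile $\overline{\dim}_\theta^{\dim_{qA}E} E$ with the intermediate dimension $\uda E$ itself.

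First, I would establish the equality
\[
\overline{\dim}_{\theta}^{\dim_{qA}E} E = \uda E \quad \text{for every } \theta \in (0,1].
\]
The inequality $\overline{\dim}_\theta^{t} E \leq \uda E$ for all $t$ is part of the general framework (used, for instance, in the proof of Theorem~\ref{main theo 2}), so only the reverse inequality requires work. For this I would invoke Corollary~\ref{cor for proj} (which is Theorem~\ref{Theo1} in the limit $\alpha \to 1$) at the value $t = \dim_{qA} E$: this yields
\[
\overline{\dim}_\theta^{\dim_{qA}E} E \;\geq\; \uda E - \max\{0,\; \dim_{qA}E - \dim_{qA}E\} \;=\; \uda E,
\]
so equality holds. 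An entirely analogous argument, using the $\da$-version of Theorem~\ref{Theo1}, gives $\underline{\dim}_\theta^{\dim_{qA}E} E = \da E$.

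With these identifications in hand, the exceptional set rewrites as
\[
\{V \in G(d,m) : \uda P_V E < \uda E\} \;=\; \{V \in G(d,m) : \uda P_V E < \overline{\dim}_\theta^{\dim_{qA}E} E\},
\]
and Theorem~\ref{exceptional directions} applied with $\lambda = \dim_{qA} E$ immediately delivers the desired Hausdorff dimension bound $m(d-m) - (m - \dim_{qA}E)$. The lower-dimension statement follows by repeating the same reasoning with $\da$ and $\underline{\dim}_\theta^{\dim_{qA}E}$ replacing their upper counterparts.

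I do not anticipate any substantive obstacle: the argument is essentially a one-line combination of Theorem~\ref{exceptional directions} with Corollary~\ref{cor for proj}. The only point requiring minor care is verifying that Corollary~\ref{cor for proj} can indeed be read as asserting $\overline{\dim}_\theta^{\dim_{qA}E} E = \uda E$ (rather than merely controlling projections); but inequality \eqref{ineq qA} of that corollary is a statement about the profile itself, so this is immediate.
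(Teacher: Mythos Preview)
Your plan is correct and matches the paper's own proof, which reads in its entirety ``Apply Corollary~\ref{cor for proj} in Theorem~\ref{exceptional directions}.'' One small imprecision: inequality~\eqref{ineq qA} as stated is about $\uda P_V E$, not directly about the profile; the profile statement you need is really Theorem~\ref{Theo1} itself (with $\alpha\to 1$ and $t=\dim_{qA}E$), which you already identify as the content behind Corollary~\ref{cor for proj}.
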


\begin{proof}
    Apply Corollary \ref{cor for proj} in Theorem \ref{exceptional directions}.
\end{proof}

\begin{cor}\label{excepcional 2}
    Let $E\subset\rd$ be bounded, $1\leq m\leq d$, and $\lambda\in (0,\uqh E)$. Then,
    $$
    \dimh\left\{V\in G(d,m):\uda P_V E<\lambda\right\} \leq m(d-m)-(m-\lambda),
    $$
    and if $\lambda\in (0,\lqh E)$, then 
    $$
    \dimh\left\{V\in G(d,m):\da P_V E<\lambda\right\} \leq m(d-m)-(m-\lambda).
    $$
\end{cor}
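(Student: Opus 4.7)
The plan is to deduce this corollary as a direct consequence of Corollary \ref{herramienta 2} combined with Theorem \ref{exceptional directions}. The key observation is that Theorem \ref{exceptional directions} supplies an exceptional-set estimate phrased in terms of the dimension profile $\overline{\dim}_\theta^\lambda E$, whereas Corollary \ref{herramienta 2} tells us that whenever $\lambda < \uqh E$ this profile is exactly equal to $\lambda$. Aligning these two facts is the entire content of the proof.

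First I would dispose of the trivial case $\lambda \geq m$: here $m(d-m)-(m-\lambda) \geq m(d-m) = \dimh G(d,m)$, so the claim is automatic. Hence we may assume $0 < \lambda \leq m$. Fix any $\theta \in (0,1]$. Since $\lambda < \uqh E$, Corollary \ref{herramienta 2} gives $\overline{\dim}_\theta^\lambda E = \lambda$, and consequently the set $\{V \in G(d,m) : \uda P_V E < \lambda\}$ coincides with $\{V \in G(d,m) : \uda P_V E < \overline{\dim}_\theta^\lambda E\}$. An application of Theorem \ref{exceptional directions} (which is valid precisely because $0 \leq \lambda \leq m$) then yields
$$
\dimh\{V \in G(d,m) : \uda P_V E < \lambda\} \leq m(d-m)-(m-\lambda).
$$
The argument for the lower intermediate dimension is entirely analogous: the other half of Corollary \ref{herramienta 2} delivers $\underline{\dim}_\theta^\lambda E = \lambda$ under the hypothesis $\lambda < \lqh E$, after which one invokes the corresponding clause of Theorem \ref{exceptional directions}.

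There is essentially no obstacle here: once one recognizes that $\lambda < \uqh E$ (respectively $\lambda < \lqh E$) is precisely the condition that converts the profile quantity appearing in Theorem \ref{exceptional directions} into the numerical value $\lambda$ in the statement, the result is a one-line deduction. The only point worth a line of commentary is the reduction to $\lambda \leq m$, which is needed to match the hypothesis of Theorem \ref{exceptional directions} but is handled trivially as above.
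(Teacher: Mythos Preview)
Your proposal is correct and follows exactly the approach of the paper, which simply says ``Apply Corollary \ref{herramienta 2} in Theorem \ref{exceptional directions}.'' You have merely spelled out the details (including the harmless reduction to $\lambda\le m$) of that one-line argument.
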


\begin{proof}
    Apply Corollary \ref{herramienta 2} in Theorem \ref{exceptional directions}.
\end{proof}


\subsection*{Acknowledgments} 
We thank the anonymous referee for the careful reading of the manuscript and in particular for suggesting the introduction of the term {\em quasi Hausdorff} dimension.

The research of the authors is partially supported by Grants
PICT 2022-4875 (ANPCyT), 
PIP 202287/22 (CONICET), and 
UBACyT 2022-154 (UBA). Nicolas Angelini is also partially supported by PROICO 3-0720 ``An\'alisis Real y Funcional. Ec. Diferenciales''.
\bibliographystyle{plain}
\bibliography{Critical_values_for_Intermediate_and_Box_dimension_of_projections_and_other_images}
\end{document}